\numberwithin{equation}{section}
\DeclareFontFamily{OT1}{rsfs}{}
\DeclareFontShape{OT1}{rsfs}{n}{it}{<-> rsfs10}{}
\DeclareMathAlphabet{\mathscr}{OT1}{rsfs}{n}{it}
\theoremstyle{plain}
\newtheorem{theorem}{Theorem}[section]
\newtheorem{proposition}[theorem]{Proposition}
\newtheorem{lemma}[theorem]{Lemma}
\newtheorem{corollary}[theorem]{Corollary}
\theoremstyle{definition}
\newtheorem{remark}[theorem]{Remark}
\newcommand\R{\mathbb{R}}
\newcommand\eps{\varepsilon}
\begin{document}

\title[Blowup for high dimensional NLW]{Finite time blowup for high dimensional nonlinear wave systems with bounded smooth nonlinearity}

\author{Terence Tao}
\address{UCLA Department of Mathematics, Los Angeles, CA 90095-1555.}
\email{tao@math.ucla.edu}


\subjclass[2010]{35Q30}

\begin{abstract}  We consider the global regularity problem for nonlinear wave systems
$$ \Box u = f(u) $$
on Minkowski spacetime $\R^{1+d}$ with d'Alembertian $\Box := -\partial_t^2 + \sum_{i=1}^d \partial_{x_i}^2$, where the field $u \colon \R^{1+d} \to \R^m$ is vector-valued, and the nonlinearity $f \colon \R^m \to \R^m$ is a smooth function with $f(0)=0$ and all derivatives bounded; the higher-dimensional sine-Gordon equation $\Box u = \sin u$ is a model example of this class of nonlinear wave system.  For dimensions $d \leq 9$, it follows from the work of Heinz, Pecher, Brenner, and von Wahl that one has smooth solutions to this equation for any smooth choice of initial data.  Perhaps surprisingly, we show that this result is almost sharp, in the sense that for any $d \geq 11$, there exists an $m$ (in fact we can take $m=2$) and a nonlinearity $f \colon \R^m \to \R^m$ with all derivatives bounded, for which the above equation admits solutions that blow up in finite time.  The intermediate case $d=10$ remains open.
\end{abstract}

\maketitle


\section{Introduction}

This paper is concerned with nonlinear wave systems of the form
\begin{equation}\label{boxu}
 \Box u = f(u) 
\end{equation}
involving a vector-valued field $u\colon \R^{1+d} \to \R^m$ on Minkowski spacetime $\R^{1+d}$ with d'Alembertian $\Box := -\partial_t^2 + \sum_{i=1}^d \partial_{x_i}^2$, where the field $u\colon \R^{1+d} \to \R^m$ is vector-valued, and $f\colon \R^m \to \R^m$ is a smooth function with all derivatives bounded and $f(0)=0$.  A typical example of such a system is the higher-dimensional sine-Gordon equation
$$ \Box u = \sin u.$$
Suppose we are given an initial position $u_0\colon \R^{1+d} \to \R^m$ and an initial velocity $u_1\colon \R^{1+d} \to \R^m$, which are smooth and compactly supported.  Standard energy methods (see e.g. \cite[\S 1.4]{sogge-wave}) show that for any such data, there is a unique time $0 < T_* \leq \infty$ and a smooth solution $u: [0,T_*) \times \R^d \to \R^m$ to \eqref{boxu}, compactly supported in space for each time $t$, such that $u(0,x) = u_0(x)$ and $\partial_t u(0,x) = u_1(x)$; furthermore, if $T_* < \infty$, the solution $u$ cannot be smoothly extended to the time $t=T_*$ (in fact the norm $\|u(t)\|_{L^\infty_x(\R^d)}$ must go to infinity as $t \to T_*^-$).  We say that the equation \eqref{boxu} enjoys \emph{global regularity} if the maximal time of existence $T_*$ is infinite for any choice of smooth, compactly supported initial data $u_0, u_1$.  Actually, due to finite speed of propagation (see e.g. \cite[Proposition 3.3]{tao-book}), the requirement that $u_0$ and $u_1$ be compactly supported can be dropped without affecting the global regularity property.

The equation \eqref{boxu} is an extremely ``subcritical'' semilinear wave equation, since the nonlinearity $f$ exhibits no growth whatsoever at infinity.  As a consequence, there are plenty of \emph{a priori} bounds one can place on solutions to \eqref{boxu} with smooth compactly supported initial data.  For instance, suppose $T_*$ were finite.  From finite speed of propagation we see that $u(t)$ is supported in  a fixed compact region for all times $t \in [0,T_*)$.   In particular, $f(u)$ lies in the space $L^\infty_t L^2_x( [0,T_*) \times \R^d))$, which from \eqref{boxu} and the energy inequality shows that $u$ lies in the space $L^\infty_t H^1_x( [0,T_*) \times \R^d )$.  From the chain rule identity
$$ \nabla_x f(u) = (\nabla_{\R^m} f)(u) \nabla_x u,$$
where $(\nabla_{\R^m} f)(v)\colon \R^m \to \R^m$ is the derivative of $f\colon \R^m \to \R^m$ at a point $v \in \R^m$, we then conclude that $f(u)$ also lies in $L^\infty_t H^1( [0,T*) \times \R^d)$, and hence that $u$ lies in the space $L^\infty_t H^2_x([0,T_*) \times \R^d)$.  In the case of very low dimensions $d \leq 3$, Sobolev embedding then gives an $L^\infty_t L^\infty_x$ bound on $u$ which, when combined with energy methods, is sufficient to establish global regularity.

Naively one might expect to keep iterating the above procedure to handle arbitrarily large dimensions, but complications arise from the lower order terms in the iterated chain rule (or Faa di Bruno formula), which can ultimately be ``blamed'' on the phenomenon that $f(u)$ may oscillate at significantly higher frequencies than $u$.  For instance, the second derivative of $f(u)$ is given by the formula
$$ \nabla_x^2 f(u) = (\nabla_{\R^m} f)(u) \nabla_x^2 u + (\nabla_{\R^m}^2 f)(u) \nabla_x u \nabla_x u$$
where we do not specify exactly how to contract the various tensors displayed in this equation against each other for sake of exposition.  The existing $L^\infty_t H^2_x$ bound on $u$ allows us to place the lower order term $(\nabla_{\R^m}^2 f)(u) \nabla_x u \nabla_x u$ in $L^\infty_t L^2_x$ in four and fewer dimensions thanks to Sobolev embedding, but this is not immediately obvious in higher dimensions, so one cannot immediately upgrade the regularity of $u$ to $L^\infty_t H^3_x$.  Nevertheless, by increasingly sophisticated arguments \cite{heinz}, \cite{vonwahl}, \cite{pecher-2}, \cite{pecher-3}, \cite{pecher}, \cite{brenner}, \cite{wahl} in more complicated function spaces (such as Besov spaces), global regularity for \eqref{boxu} was established in dimensions $d \leq 9$.  Strictly speaking, the hypotheses on $m$ and $f$ in the above references were somewhat different in these references than those provided here (in particular, $f$ was allowed to exhibit some growth at infinity), but the arguments can be adapted to handle the setting under discussion; for the convenience of the reader we give such an argument in Appendix \ref{reg}.

The main result of this paper is to show that the condition $d \leq 9$ in these previous results is not merely technical, but is in fact nearly the correct threshold:

\begin{theorem}[Finite time blowup in high dimensions]\label{main}  Let $d \geq 11$ and $m \geq 2$ be integers.  Then there exists a smooth function $f\colon \R^m \to \R^m$ with all derivatives bounded, and a smooth solution $u: (0,\kappa] \times \R^d \to \R^m$ to \eqref{boxu} for some $\kappa > 0$ which cannot be smoothly continued to the spacetime origin $(0,0)$.
\end{theorem}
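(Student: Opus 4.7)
My plan is to construct an asymptotically self-similar solution of the form $u(t,x) = \sum_{k \geq 0} t^k\, U^{(k)}(x/t)$ blowing up at the origin, where each $U^{(k)}\colon \R^d \to \R^2$ is smooth. For $t > 0$ the map $x \mapsto x/t$ is smooth, so such a $u$ is automatically smooth on $(0,\kappa] \times \R^d$. On the other hand, if the leading profile $U^{(0)}$ is non-constant then $\lim_{t \to 0^+} u(t, c t) = U^{(0)}(c)$ depends on the ray direction $c$, so $u$ cannot be even continuously (let alone smoothly) extended to the spacetime origin. The nonlinearity $f$ is then designed so that $\Box u = f(u)$ holds on $(0,\kappa] \times \R^d$.

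\textbf{The hierarchy of equations.} A direct computation shows that $\Box\bigl(t^k U(x/t)\bigr) = t^{k-2}\, \mathcal{L}_k U$, where
\[
\mathcal{L}_k U \;:=\; \Delta_y U \,-\, (y \otimes y) : \nabla_y^2 U \,+\, 2(k-1)\, y \cdot \nabla_y U \,-\, k(k-1)\, U.
\]
Plugging the ansatz into $\Box u = f(u)$ and expanding $f$ about $U^{(0)}$ in a Taylor series, matching powers of $t$ yields
\[
\mathcal{L}_0 U^{(0)} = 0, \quad \mathcal{L}_1 U^{(1)} = 0, \quad \mathcal{L}_k U^{(k)} = G_{k-2}\bigl(U^{(0)}, \dots, U^{(k-2)};\, f, \nabla f, \dots\bigr) \text{ for } k \geq 2,
\]
where $G_{k-2}$ is built from the derivatives of $f$ at $U^{(0)}(y)$ and the lower profiles. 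Each $\mathcal{L}_k$ has principal symbol $|\xi|^2 - (y \cdot \xi)^2$: uniformly elliptic on $\{|y| < 1\}$, degenerate on the characteristic surface $|y| = 1$ (with Frobenius exponents $0$ and $(d-1)/2 + k$ there), and of indefinite type outside. I choose a non-constant $U^{(0)}$ living in a non-trivial spherical harmonic mode, take $U^{(1)} = 0$, and then choose $f$ and the remaining $U^{(k)}$ recursively. Since the image of $(t,y) \mapsto u(t,x)$ is (generically) an open subset of $\R^2$, the hierarchy simultaneously determines $f$ on this image and the profiles $U^{(k)}$; then $f$ is extended to a smooth map $\R^2 \to \R^2$ with all derivatives bounded by multiplying by a cutoff outside a neighborhood of the image (using $f(0) = 0$ away from that image).

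\textbf{From formal to exact; the main obstacle.} The most delicate step is to convert the formal Taylor-in-$t$ solution into an actual smooth solution on $(0,\kappa] \times \R^d$. I intend to Borel sum the formal series to an approximate solution $u_\infty$ with $\Box u_\infty - f(u_\infty) = O(t^N)$ for every $N$, and then solve for an exact solution $u = u_\infty + w$ by a contraction argument on the correction $w$ in a weighted energy space which encodes both smallness as $t \to 0$ and the vanishing of order at least $(d-1)/2$ at the light cone $|y| = 1$ forced by the Frobenius indices. Outside the backward light cone of the origin, finite speed of propagation together with $f(0) = 0$ permits smooth gluing to a trivial or arbitrary smooth extension. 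The dimension condition $d \geq 11$ is expected to enter at exactly this point: the characteristic decay $(d-1)/2 \geq 5$ combined with Sobolev embeddings in $\R^d$ should give precisely the functional-analytic room needed for the contraction to close, complementing (in the reverse direction) the exhaustive regularity arguments of Heinz, Pecher, Brenner, and von Wahl that are known to just work in dimensions $d \leq 9$. The principal analytic difficulty in the proof is therefore to carry out this perturbation in the presence of the degenerate, type-changing operators $\mathcal{L}_k$ and the conical singularity of the ansatz at the origin.
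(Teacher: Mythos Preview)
Your proposal has a fundamental gap: the ansatz $u(t,x)=U^{(0)}(x/t)+O(t)$ yields a solution that is \emph{bounded} in $L^\infty_{t,x}$ (after your gluing outside the light cone, $U^{(0)}$ is evaluated only on the compact set $\{|y|\le 1\}$). But for $\Box u=f(u)$ with $f$ smooth and all derivatives bounded, the Moser estimate $\|f(u)\|_{H^s}\le C(\|u\|_{L^\infty})\|u\|_{H^s}$ together with the $H^s$ energy inequality and Gronwall shows that any smooth solution on $(0,\kappa]$ with $\sup_t\|u(t)\|_{L^\infty_x}<\infty$ extends smoothly to $t=0$; this is exactly the continuation criterion stated in the introduction of the paper, and it holds in \emph{every} dimension. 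So if your Borel-sum-plus-contraction scheme actually produced a smooth solution, that solution would be removable at the origin, contradicting the non-extendability you claim; and if the scheme fails, you have no proof. Either way the argument does not close. A related symptom: your sketch never identifies a concrete mechanism by which $d\ge 11$ enters---``is expected to enter at exactly this point'' is a hope, not an argument. (There is also a secondary issue: for radial $U^{(0)}(y)=h(|y|)$ the equation $\mathcal L_0 U^{(0)}=0$ reduces to $(1-r^2)h''+((d-1)/r-2r)h'=0$, whose solutions smooth at $r=0$ are constant, so a nontrivial $U^{(0)}$ cannot be radial; you would have to exhibit one in a higher harmonic.)

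The paper's construction is qualitatively different and is engineered so that $\|u(t)\|_{L^\infty}$ genuinely diverges. One fixes a carefully designed spherically symmetric free wave $v\colon\R^{1+11}\to\R^2$ and sets $u=\sum_{i\ge 1} N_i^{3/2}\,v(N_i t,N_i x)\,\eta(N_{i-1}t/\delta)$ with $N_i=N_{i-1}^{5/2}$. On the block where $t\sim \delta/N_{i-1}$, $u$ has amplitude $\sim N_i^{3/2}$ and frequency $\sim N_i$, so $|\nabla u|\sim N_i^{5/2}=N_{i+1}$; composing with the bounded-but-oscillatory $f$ then produces forcing at frequency $\sim N_{i+1}$, which drives the next block. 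The nonlinearity $f$ is \emph{defined} a posteriori by $f(U(t,y)):=F(t,y)$ on each region, using that (after the change of variables $y=|x|^2$) the map $U$ is a diffeomorphism with uniformly controlled inverse; verifying this injectivity and the derivative bounds is the technical heart of the proof. The exponents $3/2$ and $5/2$ are forced by the heuristic in the introduction, and the constraint $d\ge 11$ appears exactly as the inequality $(d^2-10d-7)/16\ge 0$ needed for the derivative bounds on $f$ to close. None of this structure---the growing amplitudes, the frequency cascade $N_j\mapsto N_j^{5/2}$, or the explicit use of $d\ge 11$---is present in your self-similar scheme.
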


Applying time reversal symmetry and then shifting this solution in time by $\kappa$, and using finite speed of propagation to smoothly truncate the initial data to be compactly supported, we see that global regularity for \eqref{boxu} fails in eleven and higher dimensions for general nonlinearities $f$.   Somewhat frustratingly, neither the positive or negative results in this paper seem to extend to cover the intermediate case $d=10$, which remains open.  The argument in Theorem \ref{main} uses the strong Huygens principle in eleven dimensions, which is unavailable in ten dimensions, but this is most likely only a technical restriction.  More seriously, and as we shall see shortly, the numerology of exponents used in Theorem \ref{main} cannot satisfy all the constraints required for a consistent blowup ansatz in dimensions ten and lower.  

Our methods do not extend to $m=1$, basically because we cannot get any good injectivity properties of $u$ in this case, even after using spherical symmetry to perform a dimensional reduction.  In particular, Theorem \ref{main} does not directly establish finite time blowup for the model equation $\Box u= \sin u$ in eleven and higher dimensions.  However, they provide a \emph{barrier} to any attempt to prove global regularity for such an equation, by showing that such an attempt can only be successful if it genuinely uses some additional property of this model equation that is not shared by the more general systems \eqref{boxu}.

We now give an informal and non-rigorous description of the numerology underlying Theorem \ref{main}, with several of the notions in this description (such as the interpretation of the $\approx$ symbol) deliberately left vague.  For sake of this discussion let us restrict attention to the sine-Gordon equation $\Box u = \sin u$ in $d$ spatial dimensions.  The blowup ansatz we will use is as follows: for each frequency $N_j$ in a sequence $1 < N_1 < N_2 < N_3 < \dots$ of large quantities going to infinity, there will be a spacetime ``cube'' $Q_j = \{ (t,x): t \sim \frac{1}{N_j}; x = O(\frac{1}{N_j})\}$ on which the solution $u$ oscillates with ``amplitude'' $N_j^\alpha$ and ``frequency'' $N_j$, where $\alpha>0$ is an exponent to be chosen later; this ansatz is of course compatible with the uncertainty principle.  Since $N_j^\alpha \to \infty$ as $j \to \infty$, this will create a singularity at the spacetime origin $(0,0)$.  To make this ansatz plausible, we wish to make the oscillation of $u$ on $Q_j$ driven primarily by the forcing term $\sin u$ at $Q_{j-1}$.  Thus, by Duhamel's formula, we expect a relation roughly of the form
$$ u(t,x) \approx \int \frac{\sin((s-t)\sqrt{-\Delta})}{\sqrt{-\Delta}} \sin(1_{Q_{j-1}} u(s)) (x)\ ds$$
on $Q_j$, where $\frac{\sin((s-t)\sqrt{-\Delta})}{\sqrt{-\Delta}}$ is the usual free wave propagator, and $1_{Q_{j-1}}$ is the indicator function of $Q_{j-1}$.

On $Q_{j-1}$, $u$ oscillates with amplitude $N_{j-1}^\alpha$ and frequency $N_{j-1}$, we expect the derivative $\nabla_{t,x} u$ to be of size about $N_{j-1}^{\alpha+1}$, and so from the principle of stationary phase we expect $\sin(u)$ to oscillate at frequency about $N_{j-1}^{\alpha+1}$.  Since the wave propagator $\frac{\sin((s-t)\sqrt{-\Delta})}{\sqrt{-\Delta}}$ preserves frequencies, and $u$ is supposed to be of frequency $N_j$ on $Q_j$ we are thus led to the requirement
\begin{equation}\label{nj-form}
N_j \approx N_{j-1}^{\alpha+1}.
\end{equation}
Next, when restricted to frequencies of order $N_{j}$, the propagator $\frac{\sin((s-t)\sqrt{-\Delta})}{\sqrt{-\Delta}}$ ``behaves like'' $N_{j}^{\frac{d-3}{2}} (s-t)^{\frac{d-1}{2}} A_{s-t}$, where $A_{s-t}$ is the spherical averaging operator
$$ A_{s-t} f(x) := \frac{1}{\omega_{d-1}} \int_{S^{d-1}} f(x + (s-t)\theta)\ d\theta$$
where $d\theta$ is surface measure on the unit sphere $S^{d-1}$, and $\omega_{d-1}$ is the volume of that sphere; see e.g. \cite[\S 1.1]{sogge-wave}. In our setting, $s-t$ is comparable to $1/N_{j-1}$, and so we have the informal approximation
$$ u(t,x) \approx N_j^{\frac{d-3}{2}} N_{j-1}^{-\frac{d-1}{2}} \int_{s \sim 1/N_{j-1}} A_{s-t} \sin(u(s))(x)\ ds$$
on $Q_j$.

Since $\sin(u(s))$ is bounded, $A_{s-t} \sin(u(s))$ is bounded as well.  This gives a (non-rigorous) upper bound
$$ u(t,x) \lessapprox N_j^{\frac{d-3}{2}} N_{j-1}^{-\frac{d-1}{2}} \frac{1}{N_{j-1}} $$
which when combined with our ansatz that $u$ has amplitude about $N_j^\alpha$ on $Q_j$, gives the constraint
$$ N_j^\alpha \lessapprox N_j^{\frac{d-3}{2}} N_{j-1}^{-\frac{d-1}{2}} \frac{1}{N_{j-1}} $$
which on applying \eqref{nj-form} gives the further constraint
$$ \alpha(\alpha+1) \leq \frac{d-3}{2} (\alpha+1) - \frac{d-1}{2} - 1 $$
which can be rearranged as
$$ \left(\alpha - \frac{d-5}{4}\right)^2 \leq \frac{d^2-10d-7}{16}.$$
It is now clear that the optimal choice of $\alpha$ is 
\begin{equation}\label{alpha-dig}
\alpha = \frac{d-5}{4},
\end{equation}
and this blowup ansatz is only self-consistent when 
\begin{equation}\label{d10}
  \frac{d^2-10d-7}{16} \geq 0
\end{equation}
or equivalently if $d \geq 11$.

\begin{figure} [t]
\centering
\includegraphics{./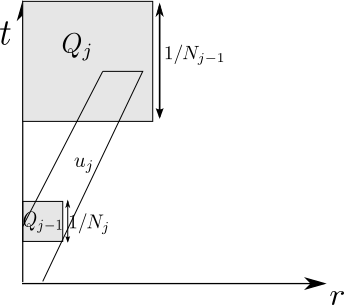}
\caption{A schematic depiction of the support of the component $u_j$ of $u$ using polar coordinates $(t,r) = (t,|x|)$; it evolves like a free wave except for a forcing term in $Q_{j-1}$, and is concentrated on $Q_j$, where it has frequency about $N_j$ and amplitude about $N_j^\alpha$.  Our construction will be in an odd spatial dimension, so that the strong Huygens principle is available, restricting the support of $u_j$ to the neighbourhood of a light cone.}
\label{fig:freq}
\end{figure}

To turn this ansatz into an actual blowup example, we will construct $u$ as the sum of various functions $u_j$ that solve the wave equation with forcing term in $Q_{j+1}$, and which concentrate in $Q_j$ with the amplitude and frequency indicated by the above heuristic analysis; see Figure \ref{fig:freq}.  The remaining task is to show that $\Box u$ can be written in the form $f(u)$ for some $f$ with all derivatives bounded.  For this one needs some injectivity properties of $u$ (after imposing spherical symmetry to impose a dimensional reduction on the domain of $u$ from $d+1$ dimensions to $1+1$).  This requires one to construct some solutions to the free wave equation that have some unusual restrictions on the range (for instance, we will need a solution taking values in the plane $\R^2$ that avoid one quadrant of that plane).  Such solutions will be constructed in Section \ref{freesec}.

\begin{remark}  Our nonlinearity $f$ is non-Hamiltonian (or non-Lagrangian) in the sense that it is not of the form $f = \nabla F$ for some smooth potential function $F\colon \R^m \to \R$.  The requirement that $f$ be Hamiltonian would impose an additional constraint
$$ \partial_\alpha \langle \partial_\beta u, \Box u \rangle_{\R^m} =  \partial_\beta \langle \partial_\alpha u, \Box u \rangle_{\R^m} $$
on the solution $u$ (which arises from the Clairaut identity $\partial_\alpha \partial_\beta F(u) = \partial_\beta \partial_\alpha F(u)$).  We believe though that for sufficiently large $m$, one can adapt the construction here (in combination with the Nash embedding theorem, as in \cite{tao-super}) to ensure that the solution $u$ obeys this constraint, and to thus obtain a Hamiltonian counterexample for Theorem \ref{main} in eleven and higher dimensions.  We will not pursue this matter here.
\end{remark}

\begin{remark} The positive results established in Appendix \ref{reg} rely on Strichartz estimates, while the negative results involve solutions for which the Strichartz estimates are not sharp; this is related to the fact that the solution component $u_j$ that we construct in our proof of Theorem \ref{main} does not occupy all of $Q_{j-1}$, but is instead concentrated near a light cone (see Figure \ref{fig:freq}), thus ``wasting a dimension'' in some sense.  This may potentially explain the gap between the positive and negative results.  In any event, closing the gap in the positive direction seems to require estimates that go beyond the Strichartz estimates, whereas closing the gap in the negative direction may require constructions of solutions in which the Strichartz estimates are closer to being sharp.
\end{remark}

\begin{remark}  It is possible that one could extend the positive results in Appendix \ref{reg} to slightly higher dimensions, such as $d=10$ or $d=11$, if one imposes some additional decay on higher derivatives of $f$ at infinity, e.g. if one requires that $|\nabla_{\R^m}^j f(x)| \lesssim_j (1+|x|)^{-j}$ for all $x \in \R^m$, or even requiring that $f$ be compactly supported.  This would go a fair way towards eliminating, or at least attenuating, the blowup ansatz used in Theorem \ref{main}.  However, the fundamental issue remains in this setting that the nonlinearity $f(u)$ can still oscillate at a significantly higher frequency than $u$ itself (although now this oscillation will be largely confined to a small neighbourhood of the zero set $\{u=0\}$ of $u$), and it does not appear likely that the positive results can be extended to arbitrarily high dimension even with such strong hypotheses on the nonlinearity.
\end{remark}
 
The author is supported by NSF grant DMS-1266164 and by a Simons Investigator Award.  The author is also indebted to Jeffrey Rauch for suggesting this question, and Michael Peake and Claude Zuily for some corrections and suggestions.  The author is particularly indebted to Claude Zuily for pointing out a numerical error in the original version of Appendix \ref{reg}.

\subsection{Notation}

We use $|x|$ to denote the Euclidean norm of a vector $x$.  If $f\colon \R \to \R^m$ is a smooth function, we use $f^{(j)}\colon \R \to \R^m$ to denote the $j^{\operatorname{th}}$ derivative of $f$.

We use $X \lesssim Y$, $Y \gtrsim X$, or $X = O(Y)$ to denote the estimate $|X| \leq CY$ for an absolute constant $C$, and $X \sim Y$ to denote the estimates $X \lesssim Y \lesssim X$.  We will often require the implied constant $C$ in the above notation to depend on additional parameters, which we will indicate by subscripts (unless explicitly omitted), thus for instance $X \lesssim_j Y$ denotes an estimate of the form $|X| \leq C_j Y$ for some $C_j$ depending on $j$.

\section{A lemma on spherically symmetric functions}

We will be working with smooth spherically symmetric functions $u: I \times \R^d \to \R^m$ on various intervals $I$, that is to say functions $u(t,x)$ that depend only on the time $t$ and on the magnitude $r=|x|$ of the spatial variable.  As is well known, one can perform a dimensional reduction, using the coordinates $(t,r) = (t,|x|)$ instead of $(t,x)$, to view such functions as functions on the strip $I \times [0,+\infty)$ rather than $I \times \R^d$.  But when one does so, one creates a degeneracy at the time axis $r=0$; more precisely, smooth spherically symmetric functions $u$ must necessarily have a vanishing gradient $\nabla_x u(t,0)$ at the spatial origin, which makes $\partial_r u(t,r)$ vanish at $r=0$.  This vanishing of the first derivative is undesirable for our applications, as we will need to invert the map $u$ near the time axis using the inverse function theorem.  Because of this, it will be more convenient to work with the variable $y := r^2 = |x|^2$ rather than $r$.  In this section we give some simple calculus lemmas that manage this change of variables.  We begin with the one-dimensional scalar case $d=m=1$, in which case spherical symmetry just means that the functions involved are even functions of the spatial variable $x$.

\begin{lemma}[One-dimensional spherically symmetric functions]\label{rad}  Let $f\colon \R \to \R$ be a smooth even function obeying the bounds 
\begin{equation}\label{cantry}
|f^{(j)}(x)| \lesssim_j 1
\end{equation}
for all $j \geq 0$ and $x \in \R$.  Let $F: [0,+\infty) \to \R$ be the function defined by setting $F(x^2) := f(x)$ for all $x \in \R$.  Then $F$ is smooth and
\begin{equation}\label{fajy}
 |F^{(j)}(y)| \lesssim_j (1+y^{1/2})^{-j}
\end{equation}
for all $j \geq 0$ and $y \in [0,+\infty)$.
\end{lemma}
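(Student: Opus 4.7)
The plan is to reduce the claim to direct computations using the substitution $x = \sqrt{y}$. First I would note that $F$ is smooth on $[0,\infty)$: this is the classical fact that an even smooth function of $x$ descends to a smooth function of $y = x^2$, provable via Taylor expansion at the origin together with a smooth remainder (the odd-order Taylor coefficients of $f$ all vanish by evenness).

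For the quantitative estimate on the regime $y \geq 1$, I would set $x = \sqrt{y}$ and use $\frac{d}{dy} = \frac{1}{2x}\frac{d}{dx}$ to derive by induction on $j$ the formula
$$ F^{(j)}(y) = \sum_{k=1}^{j} c_{j,k}\, x^{k-2j}\, f^{(k)}(x) $$
for certain combinatorial constants $c_{j,k}$ (the inductive step being $c_{j+1,k} = \tfrac{k-2j}{2} c_{j,k} + \tfrac{1}{2} c_{j,k-1}$, with the convention that $c_{j,k}$ vanishes outside $1 \leq k \leq j$). Applying the hypothesis $|f^{(k)}| \lesssim_k 1$ then gives $|F^{(j)}(y)| \lesssim_j \sum_{k=1}^{j} y^{(k-2j)/2}$; since each exponent is $\leq -j/2$, the dominant term on $y \geq 1$ is at $k=j$, producing $|F^{(j)}(y)| \lesssim_j y^{-j/2} \sim_j (1+y^{1/2})^{-j}$ on this range.

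The main obstacle is the compact range $y \in [0,1]$, on which the individual terms of the formula diverge as $y \to 0^+$; the finite value of $F^{(j)}(y)$ there is produced by cancellations that the formula does not make explicit. I would handle this by splitting $f = P + R$, where $P$ is the even Taylor polynomial of $f$ at the origin of degree $2N$ with $N \geq j$, and $R$ is the remainder. Then $P(\sqrt{y})$ is a polynomial in $y$ whose derivatives are manifestly bounded on $[0,1]$, while Taylor's theorem combined with the hypothesis yields the pointwise bounds $|R^{(k)}(x)| \lesssim_{k,N} |x|^{2N+1-k}$ for $0 \leq k \leq 2N+1$. Applying the formula of the previous paragraph with $R$ in place of $f$ produces contributions of size $y^{(2N+1-k)/2} \cdot y^{(k-2j)/2} = y^{N+1/2-j}$, which is uniformly bounded on $[0,1]$ (and in fact vanishes at the origin) provided $N \geq j$. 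Combining this with the polynomial part yields $|F^{(j)}(y)| \lesssim_j 1$ on $[0,1]$, which together with the previous paragraph gives the desired bound $|F^{(j)}(y)| \lesssim_j (1+y^{1/2})^{-j}$ on all of $[0,\infty)$.
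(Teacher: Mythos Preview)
Your argument is correct, but it proceeds differently from the paper's proof. The paper argues by induction on $j$: from the identity $2xF'(x^2)=f'(x)$ and the vanishing of $f'$ at the origin, one writes $F'(x^2)=\tfrac12\int_0^1 f''(tx)\,dt$, so that $F'$ is the function associated (in the sense of the lemma) to the new smooth even bounded-derivative function $\tilde f(x)=\tfrac12\int_0^1 f''(tx)\,dt$, and the inductive hypothesis immediately handles $y\leq 1$; for $y\geq 1$ the paper inserts a dyadic cutoff $\eta(x/2^{k/2})$ so that $F'(x^2)=2^{-k/2-1}\tfrac{\eta(x/2^{k/2})}{x/2^{k/2}}f'(x)$ on $2^k\leq y\leq 2^{k+1}$, and applies the inductive hypothesis to this new even function to extract the factor $2^{-k/2}\sim y^{-1/2}$. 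Your route instead writes down the closed-form expansion $F^{(j)}(y)=\sum_{k=1}^{j} c_{j,k}\,x^{k-2j}f^{(k)}(x)$ and reads off the large-$y$ decay directly, while for small $y$ you isolate the cancellation by subtracting a Taylor polynomial. Your approach is more explicit and self-contained (no cutoff functions, no auxiliary even functions), at the cost of carrying the combinatorial coefficients and the Taylor remainder bounds; the paper's inductive reduction is slicker but relies on the integral-representation trick at the origin. Either method is adequate for the lemma's purpose.
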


It is possible to reverse the implication and deduce \eqref{cantry} from \eqref{fajy}, but we will not need to do so here.  The factor $y^{1/2}$ in \eqref{fajy} naturally arises from the chain rule, since $\frac{dy}{dx} = 2 y^{1/2}$ when $y = x^2$; the hypothesis that $f$ is even gives an improvement in the range $|y| \leq 1$ by replacing $y^{1/2}$ with $1$.

\begin{proof}  Clearly $F$ is smooth away from $0$.  By the fundamental theorem of calculus it suffices to prove the uniform bounds \eqref{fajy} for $y>0$.  We do this by induction on $j$.  The $j=0$ case is trivial, so suppose that $j \geq 1$ and the claim has already been proven for $j-1$.  Differentiating the identity $F(x^2) = f(x)$, we have
\begin{equation}\label{fp}
 2x F'(x^2) = f'(x) 
\end{equation}
for any $x$.  As $f$ is even, $f'$ vanishes at zero, hence $f'(x) = x\int_0^1 f''(tx)\ dt$ by the fundamental theorem of calculus.  We conclude that
$$ F'(x^2) = \frac{1}{2} \int_0^1 f''(tx)\ dt$$
for $x \neq 0$; the claim also follows for $x=0$ by Taylor expansion of $f$ at the origin.

The function $\tilde f \colon x \mapsto \frac{1}{2} \int_0^1 f''(tx)\ dt$ is smooth and even, and by \eqref{cantry} and the triangle inequality, all the derivatives of $\tilde f$ are bounded.  Applying the induction hypothesis with $j$ replaced by $j-1$ and $f$ replaced by $\tilde f$, we obtain the bound
$$ |F^{(j)}(y)| \lesssim_j (1+y^{1/2})^{-(j-1)}$$
which gives the claim when $y \leq 1$.

It remains to establish \eqref{fajy} in the region $2^k \leq y \leq 2^{k+1}$ for any $k \geq 0$.  From \eqref{fp} we have
$$ F'(x^2) = 2^{-k/2-1} \frac{\eta( \frac{x}{2^{k/2}} )}{x/2^{k/2}} f'(x) $$
for $2^k \leq x^2 \leq 2^{k+1}$ and some smooth even cutoff function $\eta\colon \R \to \R$ that equals one on $[-2,-1] \cup [1,2]$ and vanishes outside of $[-4,-1/2] \cup [1/2,1/4]$.  The function $f_k \colon x \mapsto \frac{\eta( \frac{x}{2^{k/2}} )}{x/2^{k/2}} f'(x)$ is smooth and even, and by \eqref{cantry} and the product rule has all derivatives bounded.  Applying the induction hypothesis with $j$ replaced by $j-1$ and $f$ replaced by $f_k$, we obtain the bound
$$ |F^{(j)}(y)| \lesssim_j 2^{-k/2-1} (1+y^{1/2})^{-(j-1)}$$
for $2^k \leq y \leq 2^{k+1}$, which gives \eqref{fajy} as required.
\end{proof}

Now we may easily generalise to higher dimensions:

\begin{corollary}\label{cod}  Let $I \subset \R$ be an interval, let $d,m \geq 1$, and let $u: I \times \R^d \to \R^m$ be a smooth spherically symmetric function obeying the bounds
\begin{equation}\label{cantry-2}
|\partial^k_t \nabla_{x}^j u(t,x)| \lesssim_{j,k} A B^{-j} C^{-k}
\end{equation}
for all $j,k\geq 0$ and $(t,x) \in I \times \R^d$, and some $A,B,C>0$.  Let $U: I \times [0,+\infty) \to \R^m$ be the function defined by setting $U(t,|x|^2) := u(t,x)$ for all $(t,x) \in I \times \R^d$.  Then $U$ is smooth and
\begin{equation}\label{fajy-2}
|\partial_k^t \partial_y^j U(t,y)| \lesssim_{j,k,d,m} A (1 + B y^{1/2})^{-j} C^{-k}
\end{equation}
for all $j \geq 0$ and $(t,y) \in I \times [0,+\infty)$.
\end{corollary}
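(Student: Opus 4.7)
My plan is to reduce Corollary \ref{cod} to Lemma \ref{rad} by restricting $u$ to a spatial coordinate axis and then rescaling. Working componentwise immediately reduces to the scalar case $m=1$, and since the change of variables $y = |x|^2$ commutes with $\partial_t$, I fix the time derivative order $k$ and work with $u^{(k)}(t,x) := \partial_t^k u(t,x)$, which by hypothesis satisfies $|\nabla_x^j u^{(k)}(t,x)| \lesssim_{j,k} A B^{-j} C^{-k}$. Spherical symmetry of $u$ forces the restriction $\tilde u_{t,k}(s) := u^{(k)}(t, s e_1)$, for any fixed unit vector $e_1 \in \R^d$, to be a smooth even function of $s \in \R$ with $|\tilde u_{t,k}^{(j)}(s)| \lesssim_{j,k} A B^{-j} C^{-k}$, and by construction $\partial_t^k U(t, s^2) = \tilde u_{t,k}(s)$. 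This places the remaining task squarely in the one-dimensional setting of Lemma \ref{rad}.

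To normalize the hypothesis so that Lemma \ref{rad} applies, I rescale in $s$: set $g(s) := \tilde u_{t,k}(Bs)/(AC^{-k})$. The chain rule gives $|g^{(j)}(s)| = B^j |\tilde u_{t,k}^{(j)}(Bs)|/(AC^{-k}) \lesssim_{j,k} 1$ uniformly in $s$, and $g$ is smooth and even, so Lemma \ref{rad} yields a smooth $G : [0,\infty) \to \R$ with $G(s^2) = g(s)$ and $|G^{(j)}(y)| \lesssim_j (1+y^{1/2})^{-j}$. Unwinding the definitions, $G(y) = \partial_t^k U(t, B^2 y)/(AC^{-k})$; differentiating $j$ times in $y$ and then substituting $y \mapsto y/B^2$ yields
\[
|\partial_y^j \partial_t^k U(t,y)| \lesssim_{j,k} A C^{-k} B^{-2j} \bigl(1 + (y/B^2)^{1/2}\bigr)^{-j} = A C^{-k} \bigl(B^2 + B y^{1/2}\bigr)^{-j},
\]
from which the stated bound $A(1+By^{1/2})^{-j} C^{-k}$ follows (with a constant depending only on $j$). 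Smoothness of $U$ in $y$ is inherited from that of $G$, and joint smoothness in $(t,y)$ follows from the smoothness of $u$ in $t$ together with this argument applied for each $k$.

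The only real obstacle is the bookkeeping: keeping the various powers of $B$ introduced by the rescaling straight so that the final decay profile in $y$ reads $(1+By^{1/2})^{-j}$ and not some other algebraic combination. The axial restriction trivializes the dependence on the dimension $d$ and the range dimension $m$, and the commutation of $\partial_t$ with the change of variables reduces all time derivatives to the $k=0$ case after relabeling.
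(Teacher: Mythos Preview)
Your argument is correct and follows essentially the same route as the paper: reduce to $m=1$ componentwise, restrict to a coordinate axis to reduce to $d=1$, handle time derivatives by applying the argument to $\partial_t^k u$, and then invoke Lemma~\ref{rad} after a rescaling. The only cosmetic difference is that the paper normalises $A=B=C=1$ at the outset, whereas you carry the rescaling through explicitly and unwind at the end; both yield the bound $A C^{-k}(B^2+By^{1/2})^{-j}$, which is what the paper's rescaling also delivers (and which agrees with the stated form $(1+By^{1/2})^{-j}$ once $B$ is bounded below, as it is in every application).
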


Again, one can establish a converse to this claim, but we will not need to do so here.

\begin{proof}  We can rescale $A=B=C=1$.  By breaking into components we may assume $m=1$.  By restricting to the plane $\{ (t, x_1 e_1): t \in I, x_1 \in \R\}$ we may assume that $d=1$.  The claim then follows by applying Lemma \ref{rad} to $u$ and its time derivatives.
\end{proof}

\section{A solution to the free wave equation}\label{freesec}

To prove Theorem \ref{main}, it is clear that we can restrict to the case $m=2$, since the general case $m \geq 2$ can then be established by embedding $\R^2$ in $\R^m$ (and trivially extending the nonlinearity $f$ from $\R^2$ to $\R^m$).  Similarly, we can restrict to the case $d=11$, as the $d>11$ case then follows by adding dummy spatial variables.

To build the solution $u$ in the case $d=11, m=2$, we will need a certain ``non-degenerate'' solution $v\colon \R^{1+11} \to \R^2$ to the free wave equation $\Box v = 0$, which is easy to construct when there are at least two degrees of freedom and the number of spatial dimensions is odd.  We begin by constructing a scalar solution with a certain unusual positivity property.

\begin{proposition}[Scalar free wave]\label{vorp-1}  Let $d$ be an odd natural number.  Then there exists a smooth spherically symmetric solution $v_1\colon \R^{1+d} \to \R$ to the free wave equation
$$ \Box v_1 = 0 $$
which is compactly supported in space for each time, and obeys the following additional properties:
\begin{itemize}
\item[(i)]  There exists $\eps > 0$ such that $v_1(t,x)$ is strictly positive whenever $t \geq 0$ and $t-\eps \leq |x| \leq t + \eps$.  (In particular, $v_1(0,0)$ is strictly positive.)
\item[(ii)]  The derivatives $\partial_t v_1(0,0)$, $\partial_{tt} v_1(0,0)$, and\footnote{Strictly speaking, one should write $\partial_y V_1(0,0)$ here instead of $\partial_y v_1(0,0)$, where $V_1: \R \times [0,+\infty) \to \R$ is the function such that $V_1(t,|x|^2) := v_1(t,x)$ for all $(t,x) \in \R^{1+d}$.} $\partial_{y} v_1(0,0)$ are negative.
\end{itemize}
\end{proposition}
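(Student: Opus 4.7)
The plan is to construct $v_1$ explicitly by descent from dimension three. Setting $d = 2k+1$ and $r = |x|$, I take
\[
v_1(t,r) := \left(\frac{1}{r}\frac{\partial}{\partial r}\right)^{\!k-1}\!\left[\frac{\phi(t-r) - \phi(t+r)}{r}\right]
\]
for a smooth, compactly supported profile $\phi\colon\R\to\R$ to be chosen. The bracket is a smooth even function of $r$ (because $\phi(t-r)-\phi(t+r)$ is odd in $r$), and $\frac{1}{r}\partial_r$ preserves smoothness of even functions of $r$ (equivalently, smooth functions of $y=r^2$, cf.\ Lemma~\ref{rad}), so $v_1$ is smooth on $\R^{1+d}$. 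The intertwining identity $\frac{1}{r}\partial_r \circ (\partial_r^2 + \tfrac{2j}{r}\partial_r) = (\partial_r^2 + \tfrac{2(j+1)}{r}\partial_r) \circ \frac{1}{r}\partial_r$, iterated from $j=1$ to $k-1$ starting from the $d=3$ spherical solution $(\phi(t-r)-\phi(t+r))/r$, shows that $\Box v_1 = 0$ in $1+d$ dimensions. Spatial compactness of $v_1(t,\cdot)$ for each $t$ is automatic.

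Taylor expanding the bracket in $r$ and applying $(\tfrac{1}{r}\partial_r)^{k-1}$ term by term yields the explicit series
\[
v_1(t,r) = -\sum_{m\ge 0}C_{m,k}\,\phi^{(2m+2k-1)}(t)\,r^{2m},\qquad C_{m,k}:=\frac{2^k(m+k-1)!}{m!(2m+2k-1)!}>0.
\]
Together with the analogous series for $\partial_t v_1, \partial_{tt}v_1$ (obtained by replacing $\phi$ with $\phi', \phi''$), and the interpretation $\partial_y v_1(0,0) = \partial_y V_1(0,0)$ via Lemma~\ref{rad}, one reads off
\[
v_1(0,0) = -C_{0,k}\phi^{(2k-1)}(0),\;\partial_t v_1(0,0) = -C_{0,k}\phi^{(2k)}(0),\;\partial_{tt}v_1(0,0) = -C_{0,k}\phi^{(2k+1)}(0),\;\partial_y v_1(0,0) = -C_{1,k}\phi^{(2k+1)}(0).
\]
Condition (ii) together with the positivity of $v_1(0,0)$ demanded in (i) is then arranged by prescribing $\phi^{(2k-1)}(0)<0$, $\phi^{(2k)}(0)>0$, $\phi^{(2k+1)}(0)>0$, three independent Taylor conditions at $0$.

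Property (i) away from the spatial origin splits into two regimes. For $t\ge\eps_0$ (the support radius of $\phi$), the term $\phi(t+r)$ vanishes for all $r\ge 0$ and a direct expansion gives
\[
v_1(t,r) = \frac{(-1)^{k-1}\phi^{(k-1)}(t-r)}{r^k} + \sum_{j<k-1}\frac{c_{k,j}\phi^{(j)}(t-r)}{r^{2k-1-j}};
\]
imposing $(-1)^{k-1}\phi^{(k-1)}\ge c_0 > 0$ on $[-\eps,\eps]$ with $c_0$ large relative to the $\|\phi^{(j)}\|_\infty$ makes the leading term dominate, yielding $v_1>0$ on $\{t\ge\eps_0,\,|t-r|\le\eps\}$. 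For the residual compact region $\{0\le t\le\eps_0,\,|t-r|\le\eps\}$, the explicit series above shows that $v_1(t,r)$ equals $-C_{0,k}\phi^{(2k-1)}(t)$ plus lower-order corrections in $r^2$ controlled by $\phi^{(2m+2k-1)}(t)$ for $m\ge 1$, so requiring $\phi^{(2k-1)}\le -c_1 < 0$ throughout $[-\eps_0,\eps_0]$ with $c_1$ large enough to dominate those corrections closes positivity on this compact region.

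The main obstacle is choosing a single compactly supported $\phi$ realizing all these sign and dominance conditions simultaneously. A clean explicit model is
\[
\phi(s) := \chi(s)\bigl(\alpha s^{k-1}+\beta s^{2k-1}+\gamma s^{2k}+\delta s^{2k+1}\bigr),
\]
where $\chi$ is a smooth cutoff equal to $1$ on $[-\eps_0,\eps_0]$ and supported in $[-2\eps_0,2\eps_0]$, with $(-1)^{k-1}\alpha>0$, $\beta<0$, $\gamma>0$, $\delta>0$ and $|\alpha|,|\beta|$ chosen large relative to $\gamma,\delta$ and to $\eps_0$. Because the polynomial Taylor coefficients of this $\phi$ at $0$ vanish to order $k-2$, the dangerous subleading $\phi^{(j)}$-terms ($j<k-1$) near the cone automatically vanish at $s=0$ and are small on $[-\eps,\eps]$, and all sign and dominance conditions above can then be verified by direct polynomial computation.
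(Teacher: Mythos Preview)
Your ansatz is the same as the paper's (the standard descent formula for spherically symmetric waves in odd dimension, up to a sign and one integration), and your computation of $v_1(0,0)$, $\partial_t v_1(0,0)$, $\partial_{tt} v_1(0,0)$, $\partial_y v_1(0,0)$ via the Taylor coefficients is correct. The large-$t$ positivity argument, based on the expansion in powers of $1/r$ with leading term $(-1)^{k-1}\phi^{(k-1)}(t-r)/r^k$, is also essentially sound (modulo the bookkeeping slip that your $\phi$ is supported in $[-2\eps_0,2\eps_0]$, not $[-\eps_0,\eps_0]$, so the threshold should be $2\eps_0$).

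The genuine gap is in the ``residual compact region'' step. Your argument there invokes the Taylor series $v_1(t,r) = -\sum_m C_{m,k}\phi^{(2m+2k-1)}(t)r^{2m}$, but this identity was derived by Taylor-expanding $\phi(t\pm r)$ about $r=0$, which is only exact when $\phi$ agrees with its Taylor polynomial at \emph{both} $t-r$ and $t+r$. On the compact region $|t-r|\le\eps$ one has $t-r\in[-\eps,\eps]$ where $\chi\equiv1$, but $t+r$ ranges up to roughly $2\eps_0$, well beyond where $\chi\equiv1$. So there is an uncontrolled cutoff correction coming from the $\phi(t+r)$ contribution. This correction is \emph{linear} in your parameters $\alpha,\beta$: for instance the $\beta$-part of the cutoff term is $\beta$ times a fixed function of $(t,r)$ depending only on $\chi$ and $k$, with no evident sign. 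Hence ``taking $c_1$ (i.e.\ $|\beta|$) large'' does not dominate it; it scales the same way. Similarly the large $|\alpha|$ you need for the cone regime feeds directly into this cutoff correction. As written, the positivity on the compact region is not established.

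The paper handles this region quite differently: it restricts to the diagonal $r=t$ and obtains the exact closed-form
\[
v_1(x/2,x/2)=2^k\,\partial_x^{k-1}\!\left(\frac{g'(x)-\sum_{i=0}^{k-1}\frac{1}{i!}g^{(i+1)}(0)x^i}{x^k}\right),
\]
which reduces the problem to choosing $g'$ of the form $P(x)+x^kR(x)$ with $R^{(k-1)}>0$ on $[0,\infty)$. Positivity on the full diagonal then follows by construction, and an $\eps$-neighbourhood of the cone is recovered by continuity on compacts and by the leading $1/t^k$ asymptotic for large $t$. This avoids having to balance cutoff artifacts against polynomial coefficients entirely. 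If you want to salvage your approach, you would need to explicitly bound the $(\chi-1)$-contribution on the compact region and show a consistent order of parameter choices ($\eps_0$, then $\eps$, then $\gamma,\delta$, then $\alpha,\beta$) under which all inequalities close simultaneously; the paper's diagonal trick is a cleaner alternative.
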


We remark in connection with the requirement (i) that in dimensions $d=1,3$ it is easy to make $v_1$ non-negative everywhere due to the positivity of the fundamental solution in this setting.  However, as is well known, in higher (odd) dimensions the fundamental solution contains derivatives, and it turns out not to be possible to ensure that $v_1$ is non-negative everywhere while still being compactly supported in space.  Fortunately, for our application to Proposition \ref{vorp} below, we can ``retreat'' to a neighbourhood of the light cone $|x|=t$ for the purposes of retaining positivity.  It may be possible to extend this proposition to even dimensions $d$, perhaps by combining the proof techniques below with the method of descent, but we will not pursue this matter here (since we will ultimately only need the $d=11$ case in any event).

\begin{proof}
Writing (by abuse of notation) $v_1(t,x) = v_1(t,r)$ with $r = |x|$, we see that $v_1\colon \R \times \R \to \R$ needs to be smooth, even in $r$ and obey the equation
$$ -\partial_{tt} v_1 + \partial_{rr} v_1 + \frac{d-1}{r} \partial_r v_1 = 0.$$
By repeatedly using the ``ladder operator'' identity
$$ \left( -\partial_{tt} + \partial_{rr} + \frac{d-1}{r} \partial_r \right) \frac{1}{r} \partial_r =  
\frac{1}{r} \partial_r \left( -\partial_{tt} + \partial_{rr} + \frac{d-3}{r} \partial_r \right) $$
for any $d$, we see that
$$ \left( -\partial_{tt} + \partial_{rr} + \frac{d-1}{r} \partial_r \right) \left(\frac{1}{r} \partial_r\right)^k =  
\left(\frac{1}{r} \partial_r\right)^{k} \left( -\partial_{tt} + \partial_{rr} \right)$$
where $k$ is the natural number $k := \frac{d-1}{2}$.
In particular, if we set
\begin{equation}\label{v1tr}
 v_1(t,r) := \left(\frac{1}{r} \partial_r\right)^k (g(t+r) + g(t-r))
\end{equation}
for some smooth $g\colon \R \to \R$ to be chosen later, then $v_1$ will be a smooth function of $t$ and $r$ that is even in $r$ (there is no singularity at $r=0$, since the operator $\frac{1}{r} \partial_r$ preserves the space of smooth even functions), and thus also a smooth spherically symmetric function of $t$ and $x$ (again, there is no singularity at $x=0$, as can be seen by applying Taylor's theorem with remainder\footnote{More precisely, Taylor expansion and the requirement of being even in $r$ gives an expansion of the form $v_1(t,r) = \sum_{i=1}^{k} c_i(t) r^{2i} + r^{2k} F(t,r)$ for some smooth coefficients $c_i(t)$ and smooth remainder $F(t,r)$, which ensures that $v_1(t,x)$ is $2k$ times continuously differentiable for any $k$.}); also, if $g$ is compactly supported, then $v_1$ will be compactly supported in space for each time $t$.

From the product rule and a routine induction on $k$ we see that
$$
\left(\frac{1}{r} \partial_r\right)^k = \sum_{i=0}^{k-1}  \frac{(k+i-1)!}{2^i(k-i-1)!i!} \frac{(-1)^i}{r^{k+i}} \partial_r^{k-i}.$$
In particular, we have
\begin{equation}\label{vater}
v_1(t,r) = \sum_{i=0}^{k-1} \frac{(k+i-1)!}{2^i(k-i-1)!i!} \frac{(-1)^i g^{(k-i)}(t+r) + (-1)^k g^{(k-i)}(t-r)}{r^{k+i}}.
\end{equation}
Specialising to the diagonal $(t,r) = (x/2,x/2)$ for $x>0$, we obtain
$$ v_1(x/2,x/2)
= 2^k \sum_{i=0}^{k-1} \frac{(k+i-1)!}{(k-i-1)!i!} \frac{(-1)^i g^{(k-i)}(x) + (-1)^k g^{(k-i)}(0)}{x^{k+i}}.$$
But from the product rule one has
$$ \sum_{i=0}^{k-1} \frac{(k+i-1)!}{(k-i-1)!i!} \frac{(-1)^i g^{(k-i)}(x)}{x^{k+i}}  
= \partial_x^{k-1} \left( \frac{g'(x)}{x^k} \right)$$
and
\begin{align*}
 \sum_{i=0}^{k-1} \frac{(k+i-1)!}{(k-i-1)!i!} \frac{(-1)^k g^{(k-i)}(0)}{x^{k+i}}  
&= - \partial_x^{k-1} \left( \frac{\sum_{i=0}^{k-1} \frac{1}{(k-i-1)!} g^{(k-i)}(0) x^{k-i-1}}{x^k} \right) \\
&= - \partial_x^{k-1} \left( \frac{\sum_{i=0}^{k-1} \frac{1}{i!} g^{(i+1)}(0) x^i}{x^k} \right) 
\end{align*}
and thus
$$ v_1(x/2,x/2) = 2^k \partial_x^{k-1} \left( \frac{g'(x) - \sum_{i=0}^{k-1} \frac{1}{i!} g^{(i+1)}(0) x^i}{x^k} \right)$$
for $x > 0$, and hence also for $x = 0$ by continuity (after removing the singularity at $x=0$ for the fraction on the right-hand side).
From this formula, we see that if we wish to have $v_1(x/2,x/2) > 0$ for all $x \geq 0$, it suffices to select $g'$ to have the form
\begin{equation}\label{rip}
 g'(x) = P(x) + x^k R(x) 
\end{equation}
for $x \geq 0$ for some polynomial $P$ of degree at most $k-1$ (which must then necessarily be given by the Taylor expansion $P(x) = \sum_{i=0}^{k-1} \frac{1}{i!} g^{(i+1)}(0) x^i$) and some smooth function $R(x)$ with $R^{(k-1)}(x) > 0$ for all $x \geq 0$.  

We set $P(x) := (-1)^k x^{k-1}$ and $R: [0,+\infty) \to \R$ to be any smooth function with $R(x) = (-1)^{k-1} \frac{1}{x}$ for all $x \geq 1$, and $R^{(k-1)}(x) > 0$ for all $x \geq 0$; such a function is easily constructed by choosing $R^{(k-1)}: [0,+\infty) \to \R$ to be any positive smooth function with $R^{(k-1)}(x) = \frac{(k-1)!}{x^k}$ for $x \geq 1$, and then integrating $R^{(k-1)}$ $k-1$ times from infinity to obtain $R$.  This defines $g'$ on $[0,+\infty]$, which vanishes on $[1,+\infty)$; we integrate this and then extend in a suitable fashion to the real line to obtain a smooth compactly supported $g\colon \R \to \R$ whose associated function $v_1$ given by \eqref{v1tr} is positive on the diagonal $\{ (x/2,x/2): x \geq 0 \}$.  Also, from construction we have $g^{(k)}(0) = P^{(k-1)}(0) = (-1)^k (k-1)!$; inserting this into \eqref{vater} we see that
$$ v_1(t,r) = \frac{(k-1)! + O(\eps)}{t^k} + O\left( \frac{1}{t^{k+1}} \right)$$
for any $\eps > 0$, any $t$ that is sufficiently large depending on $\eps$, and any $r$ between $t-\eps$ and $t+\eps$, where the implied constants can depend on $k, R, g$.  In particular, for $\eps>0$ small enough, this gives the conclusion (i) for sufficiently large $t$, and the conclusion for smaller $t$ follows from the positivity of $v_1$ on the diagonal and continuity (shrinking $\eps$ as necessary).

Finally, if we differentiate \eqref{v1tr} in time we obtain
$$ \partial_t v_1(t,r) := \left(\frac{1}{r} \partial_r\right)^k (g'(t+r) + g'(t-r)),$$
and then on performing a Taylor expansion of $g'$ and \eqref{rip} we see that
$$ \partial_t v_1(0,0) = \frac{2^{k+1} k!}{(2k)!} g^{(2k+1)}(0) = 2^{k+1} R^{(k)}(0).$$
Differentiating twice in time instead of once similarly gives
$$ \partial_{tt} v_1(0,0) = \frac{2^{k+1} k!}{(2k)!} g^{(2k+2)}(0) = 2^{k+1} \frac{2k+1}{k+1} R^{(k+1)}(0).$$
Finally, if we do not differentiate in time at all in \eqref{v1tr}, one final Taylor expansion gives
$$ \partial_{rr} v_1(0,0) =  \frac{2^{k+2} (k+1)!}{(2k+2)!} g^{(2k+2)}(0) = 2^{k+1} \frac{1}{k+1} R^{(k+1)}(0).$$
Since $R^{(k-1)}$ was chosen arbitrarily near zero (subject to being smooth and positive), we can easily ensure that $R^{(k)}(0)$ and $R^{(k+1)}(0)$ are negative, giving (ii) (since $\partial_y v_1(0,0) = \frac{1}{2} \partial_{rr} v_1(0,0)$).
\end{proof}

Now we add a second component $v_2$ to $v_1$ to obtain a vector-valued solution $v\colon \R^{1+d} \to \R^2$ to the wave equation obeying some technical conditions, mostly relating to the range of $v$ (see Figure \ref{fig:v}).

\begin{proposition}[Vector-valued free wave]\label{vorp}  Let $d$ be an odd natural number.  Then
there exists a smooth spherically symmetric solution $v\colon \R^{1+d} \to \R^2$ to the free wave equation
$$ \Box v = 0 $$
which is compactly supported in space for each time, and obeys the following additional properties:
\begin{itemize}
\item[(iii)] 
If we write $v = (v_1,v_2)$, then $v_1(0,0)$ and $v_2(0,0)$ are both positive.  Furthermore, for any $(t,x) \in\R^{1+d}$, we have either $v_1(t,x) \geq 0$ or $v_2(t,x) \geq 0$ (thus $v$ avoids the lower left quadrant of $\R^2$).
\item[(iv)] If we let $V\colon \R \times [0,+\infty) \to \R^2$ be the function defined by $V(t,|x|^2) := v(t,x)$ for $(t,x) \in \R^{1+d}$, then $\partial_t V(0,0)$ is a negative multiple of $(1,0)$, while both components of both $\partial_{tt} V(0,0)$ and $\partial_{y} V(0,0)$ are negative (where $y$ denotes the second variable of $V$).  
\item[(v)]  There exist constants $C,c>0$ such that whenever $(t,y) \in \R \times [0,+\infty)$ is such that $|V(t,y)-V(0,0)| \leq c$, we have the bounds
\begin{equation}\label{vat}
 V_1(0,0) - C( |t| + |y| ) \leq V_1(t,y) \leq V_1(0,0) - c(|t| + y) 
\end{equation}
and
\begin{equation}\label{vat-2} V_2(0,0) - C( |t|^2 + y ) \leq V_2(t,y) \leq V_2(0,0) - c(|t|^2 + y) 
\end{equation}
for the components $V_1(t,y), V_2(t,y)$ of $V(t,y)$. (In particular, this implies that $V(t,y) \neq V(0,0)$ whenever $(t,y) \neq (0,0)$.)
\item[(vi)] $V$ is supported in the region $\{ (t,y): y = (t+O(1))^2\}$, and one has the dispersive bounds
\begin{equation}\label{vos}
 |\nabla_t^j \nabla_y^k V(t,y)| \lesssim_{d,j,k} (1+|t|)^{-\frac{d-1}{2}-k}
\end{equation}
for all $(t,y) \in \R \times [0,+\infty)$ and $j,k \geq 0$, where we allow the implied constants to depend on $d$ and $v$.
\end{itemize}
\end{proposition}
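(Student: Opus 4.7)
The plan is to take $v=(v_1,v_2)$ where $v_1$ is a (suitably rescaled) solution supplied by Proposition \ref{vorp-1} and $v_2$ is a second scalar radial free wave, constructed so as to be time-symmetric and to stay positive on a neighbourhood of the light cone that covers the support of $v_1$. Time-symmetry of $v_2$ automatically gives $\partial_t v_2(0,\cdot)\equiv 0$, so that $\partial_t V(0,0)=(\partial_t V_1(0,0),0)$ is a negative multiple of $(1,0)$, with the first entry supplied by Proposition \ref{vorp-1}(ii).

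For $v_2$ I would reuse the ladder-operator ansatz from the proof of Proposition \ref{vorp-1}, but with the generating function replaced by a smooth, compactly supported, \emph{even} function $h$: $v_2(t,r):=(r^{-1}\partial_r)^k(h(t+r)+h(t-r))$, $k:=(d-1)/2$. Evenness of $h$ forces $v_2$ to be time-symmetric, and a direct Taylor expansion at the origin expresses $V_2(0,0)$ and $\partial_y V_2(0,0)$ as nonzero multiples of $h^{(2k)}(0)$ and $h^{(2k+2)}(0)$, respectively; one can therefore achieve $V_2(0,0)>0$ and $\partial_y V_2(0,0)<0$ by choosing the signs of these two Taylor coefficients, and then $\partial_{tt} V_2(0,0)=2d\,\partial_y V_2(0,0)<0$ comes from the wave equation for free, completing (iv). For the positivity of $v_2$ on a neighbourhood of the light cone I would run the asymptotic analysis in Proposition \ref{vorp-1}'s proof: for $|t|$ large and $r$ near $|t|$, the $h(t+r)$ term drops out and $v_2$ reduces to $(-1)^k \sum_i c_i h^{(k-i)}(t-r)/r^{k+i}$. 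With $h$ even and $k$ odd (the case $d=11$, $k=5$), the naive leading term $\sim h^{(k)}(0)/t^k$ vanishes identically, and the subleading term $\propto h^{(k-1)}(t-r)/t^{k+1}$ sets the sign. Choosing $h$, parallel to the construction of $g$ in Proposition \ref{vorp-1} but one derivative deeper, so that $(-1)^{k-1}h^{(k-1)}$ is strictly positive on a long interval yields $v_2>0$ on an annular neighbourhood of the forward light cone, and time symmetry transfers this positivity to the backward light cone.

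With $v_2$ in hand I would rescale $v_1$ by $(t,x)\mapsto(t/\mu,x/\mu)$ with $\mu>0$ small; this preserves (i) and (ii) and shrinks the spatial support of $v_1$ so that $\mathrm{supp}(v_1)\subset\{v_2>0\}$. Then wherever $v_1<0$ the point lies in $\mathrm{supp}(v_1)\subset\{v_2>0\}$, and outside $\mathrm{supp}(v_1)$ we have $v_1=0$, so $v$ avoids the lower-left quadrant; together with $v_j(0,0)>0$ this is (iii). The support condition in (vi) is just strong Huygens in odd $d$: compactly supported data confines $\mathrm{supp}(v(t,\cdot))$ to $\{\,||x|-|t||\lesssim 1\,\}$, which reads as $y=(t+O(1))^2$, and the dispersive bounds \eqref{vos} follow from standard pointwise estimates for free-wave derivatives, using $\partial_y=(2r)^{-1}\partial_r$ to collect the extra $(1+|t|)^{-1}$ factor per $\partial_y$. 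For (v), the Jacobian of $V$ at $(0,0)$ is triangular with nonvanishing diagonal entries (by (iv)), so $V$ is a local diffeomorphism there, and Taylor expansion combined with the identity $\partial_{tt}V_j(0,0)=2d\,\partial_y V_j(0,0)$ (which kills the linear-in-$t$ term in $V_2$) gives the quadratic-in-$t$ structure of $V_2-V_2(0,0)$ and the linear structure of $V_1-V_1(0,0)$ required by \eqref{vat} and \eqref{vat-2}.

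The main obstacle is the second step: producing the even function $h$ so that $v_2>0$ near the light cone. The trouble is precisely that the naive leading term in the ladder-operator asymptotic vanishes when $h$ is even and $k$ is odd, so one must extract the sign from a subleading term and simultaneously juggle the conditions on $h^{(k-1)}$, $h^{(2k)}(0)$ and $h^{(2k+2)}(0)$; this is a genuine but manageable extension of the explicit construction of $g$ in Proposition \ref{vorp-1}.
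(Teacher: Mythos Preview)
Your route is genuinely different from the paper's. The paper does not build $v_2$ from the ladder operator at all: it sets $v_2 := \cos(t\sqrt{-\Delta})\,w$ where $w$ is smooth, radial, supported in $\{|x|\le\eps\}$, and has $\hat w \ge 0$. Strong Huygens then confines $\mathrm{supp}\,v_2$ to the tube $\{|r-|t||\le\eps\}$ on which $v_1$ is already strictly positive by Proposition \ref{vorp-1}(i), giving (iii) immediately; your plan reverses the roles, asking instead for $v_2>0$ on a tube and $\mathrm{supp}\,v_1\subset\{v_2>0\}$ after rescaling $v_1$.

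The substantive gap in your plan is (v), not the construction of $h$. Your Taylor-expansion argument only yields \eqref{vat}, \eqref{vat-2} once one has already shown that the hypothesis $|V(t,y)-V(0,0)|\le c$ forces $(t,y)$ into a small neighbourhood of the origin. That is a \emph{global} statement, and nothing in your outline supplies it: a local diffeomorphism at $(0,0)$ says nothing about far-away $(t,y)$, and positivity of $v_2$ on a tube does not prevent $V_2$ from returning to the value $V_2(0,0)$ elsewhere on that tube (with $V_1$ simultaneously returning to $V_1(0,0)$). The paper gets this global reduction for free from the Fourier representation, since
\[
|v_2(t,x)| \le \frac{1}{(2\pi)^d}\int_{\R^d}\hat w(\xi)\,d\xi = v_2(0,0)
\]
with strict inequality for $(t,x)\ne(0,0)$, so $v_2$ has a unique global maximum at the origin, and a compactness argument then localises the hypothesis of (v). Your ladder-operator $v_2$ has no analogue of this maximum principle, so the conclusion $V(t,y)\ne V(0,0)$ for $(t,y)\ne(0,0)$ --- which is exactly what Proposition \ref{urf}(ii) later needs --- is left unjustified. (A small side remark: the identity $\partial_{tt}V_j(0,0)=2d\,\partial_yV_j(0,0)$ does not ``kill the linear-in-$t$ term in $V_2$''; that term vanishes because $\partial_tV_2(0,0)=0$ from time symmetry. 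The identity only equates the signs of the two second derivatives.)
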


\begin{proof}  
We set the first component $v_1$ of $v$ to be the function from Proposition \ref{vorp-1}.  For the second component $v_2$, we first choose $w \colon \R^{d} \to \R$ to be a smooth spherically symmetric function supported on $\{ x \in \R^{d}: |x| \leq \eps \}$ (where $\eps$ is the quantity from Proposition \ref{vorp-1}(i)), with the property that the Fourier transform
$$\hat w (\xi) := \int_{\R^{d}} w(x) e^{-i x \cdot \xi}\ dx = \int_{\R^{d}} w(x) \cos(x \cdot \xi)\ dx$$ 
is non-negative, and is not identically zero; in particular $w(0) = \frac{1}{(2\pi)^d} \int_{\R^{d}} \hat w(\xi)\ d\xi$ is strictly positive.  Such a $w$ can be constructed by taking an arbitrary non-zero smooth spherically symmetric real function and convolving it with itself.  We then let $v_2 \colon \R^{1+d} \to \R$ be the solution to the free wave equation with initial position $w$ and initial velocity $0$; more explicitly, we have
$$ v_2(t,x) := \frac{1}{(2\pi)^{d}} \int_{\R^{d}} \hat w(\xi) \cos( x \cdot \xi ) \cos(t|\xi|)\ d\xi.$$
Next, from Taylor expansion we see that
$$ \partial_t v_2(0,0) = 0$$
and
$$ \partial_{rr} v_2(0,0) = - \frac{1}{(2\pi)^{d}} \int_{\R^{d}} \hat w(\xi) \xi_1^2\ d\xi < 0$$
and
$$ \partial_{tt} v_2(0,0) = - \frac{1}{(2\pi)^{d}} \int_{\R^{d}} \hat w(\xi) |\xi|^2\ d\xi < 0$$
which, when combined with Proposition \ref{vorp-1}(ii), implies that $\partial_t V(0,0)$ is a negative multiple of $(1,0)$, and $\partial_y V(0,0)$ and $\partial_{tt} V(0,0)$ have both components negative, giving (iv).

From the triangle inequality we see that
$$ |v_2(t,x)| \leq \frac{1}{(2\pi)^{d}} \int_{\R^{d}} \hat w(\xi)\ d\xi = v_2(0,0) = w(0) > 0$$
for all $(t,x)$, with strict inequality for $(t,x) \neq (0,0)$ (because $\hat w$ cannot be supported on the measure zero set $\{ \xi: \cos(x \cdot \xi) \cos(t |\xi|) = \pm 1 \}$.  Since $v_2(t,x)$ decays to zero as $(t,x) \to \infty$, we conclude from a compactness argument that for any $\delta>0$, there exists $c>0$ such that $|(t,x)| \leq \delta$ whenever $|v_2(t,x)-v_2(0,0)| \leq c$.  In particular, after adjusting $\delta$ and $c$ appropriately, we have $|(t,y)| \leq \delta$ whenever $|V(t,y) - V(0,0)| \leq c$.  On the other hand, for $|(t,y)| \leq \delta$, we then see from two-dimensional Taylor expansion with remainder that
$$ V_1(t,y) = V_1(0,0) + (\partial_t V_1)(0,0) t + (\partial_y V_1)(0,0) y + O( \delta (|t|+y) )$$
and
$$ V_2(t,y) = V_2(0,0) + (\partial_t V_2)(0,0) t + \frac{1}{2} (\partial_{tt} V_2)(0,0) + (\partial_y V_2)(0,0) y + O( \delta (|t|^2+y) )$$
and the claims \eqref{vat}, \eqref{vat-2} then follow (for $\delta$ small enough) from (iv).

We have $v_2(0,0) = w(0) > 0$, and from Proposition \ref{vorp-1}(i) we have $v_1(0,0) > 0$, giving the first part of (iii).  Now we turn to the second part.  From the strong Huygens principle (see e.g. \cite[\S 1.1]{sogge-wave}); compare also with the explicit formula \eqref{v1tr}) we see that $v_2$ is non-vanishing only when $t-\eps \leq r \leq t+\eps$, but from Proposition \ref{vorp-1}(i) we know that $v_1$ is positive in this region.  This gives the second part of (iii).

Finally, we show (vi).  Using standard dispersive estimates for the free wave equation (see e.g. \cite[Theorem 1.1]{sogge-wave}), we see that
\begin{equation}\label{vodo}
 |v(t,x)| \lesssim_d (1+|t|)^{-\frac{d-1}{2}}
\end{equation}
for all $(t,x) \in \R^{1+d}$, and more generally
\begin{equation}\label{solar}
|\nabla_t^j \nabla_x^k v(t,x)| \lesssim_{d,j,k} (1+|t|)^{-\frac{d-1}{2}}
\end{equation}
for all $(t,x) \in \R^{1+d}$ and $j,k \geq 0$.  By Corollary \ref{cod}, this implies that
$$
 |\nabla_t^j \nabla_y^k V(t,y)| \lesssim_{d,j,k} (1+|t|)^{-\frac{d-1}{2}} (1+y^{1/2})^{-k}.
$$
On the other hand, from the strong Huygens principle\footnote{Even without the strong Huygens principle, one could still obtain the claimed estimates using the commuting vector fields method, see e.g. \cite{sogge-wave}.} we know that the left-hand side vanishes unless $y^{1/2} = t + O(1)$.  The claim follows.
\end{proof}

\begin{figure} [t]
\centering
\includegraphics{./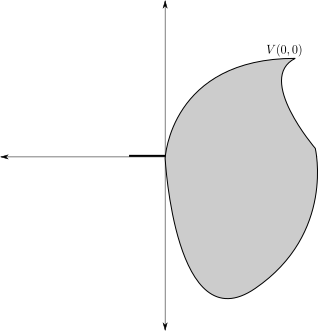}
\caption{A schematic depiction of the image $v([0,+\infty) \times \R^d) = V([0,+\infty)^2)$ of $v$ (or of $V$) for non-negative times; this image consists of the shaded region plus an interval protruding to the left of the origin (the latter arising because $v_2$ is only supported in the region where $v_1$ is strictly positive).   The two key features to note here are the ``corner'' of the image at $v(0,0)=V(0,0)$, with the image being locally contained purely in the lower left quadrant of $V(0,0)$; and the more global feature that the image completely avoids the lower left quadrant of the origin $(0,0)$.}
\label{fig:v}
\end{figure}

\section{Constructing the solution}

We are now ready to construct the solution $u$ for Theorem \ref{main}.  As discussed in the previous section, we may assume that $d=11$ and $m=2$.  Let $v\colon \R^{1+11} \to \R^2$ be the solution to the free wave equation constructed by \eqref{vorp}, with the associated function $V\colon \R \times [0,+\infty) \to \R^2$.  Henceforth all constants are allowed to depend on $v$.

Let $\delta>0$ be a small quantity to be chosen later, and then let $N_0 > 1$ be a sufficiently large quantity (depending on $\delta$) to be chosen later.  We then form the infinite sequence
$$ N_0 < N_1 < N_2 < \dots$$ 
by the recursive identity
\begin{equation}\label{nini}
 N_i := N_{i-1}^{5/2},
\end{equation}
thus $N_i = N_0^{(5/2)^i}$ for all $i$.  We then select a solution $u: (0,\frac{\delta}{N_0}] \times \R^{11} \to \R$ by the explicit formula
\begin{equation}\label{utai}
 u(t,x) := \sum_{i=1}^\infty u_i(t,x)
\end{equation}
where
$$ u_i(t,x) := N_i^{3/2} v( N_i t, N_i x ) \eta\left( \frac{t N_{i-1}}{\delta} \right) $$
where $\eta\colon \R \to [0,1]$ is a fixed smooth function (not depending on $\delta,N_0$) that is supported on $[-2,2]$ and equals $1$ on $[-1,1]$; cf. Figure \ref{fig:freq}.  The exponents $3/2$ and $5/2$ are the ones predicted by the numerology \eqref{alpha-dig}, \eqref{nj-form} discussed in the introduction.  

Observe that on any compact subset of $(0,\frac{\delta}{N_0}] \times \R^{11}$, only finitely many of the $u_i(t,x)$ are not identically zero, which implies in particular that the sum in \eqref{utai} is absolutely convergent, and that $u$ is smooth on $(0,\frac{\delta}{N_0}] \times \R^{11}$.  By Proposition \ref{vorp}, $v(t,x)$ is supported in a region of the form 
\begin{equation}\label{region}
\{ (t,x): |x| = t + O(1)\}
\end{equation}
where the implied constant in the $O(1)$ notation depends only on $v$.  This implies that $u(t)$ is compactly supported for each $0 < t \leq 1$.

Let $j$ be a natural number.  At the point $(t,x) = (\frac{\delta}{N_j}, 0)$, the fields $u_i(t,x)$ vanish for $i > j+1$, while from Taylor expansion we have
$$ u_i\left(\frac{\delta}{N_j}, 0\right) = N_i^{\frac{3}{2}} \left(v(0,0) + O\left( \delta \frac{N_{i-1}}{N_j} \right) \right)$$
for $i \leq j+1$.  By Proposition \ref{vorp}(iii), we have $|v(0,0)| \gtrsim 1$.  For $\delta$ small enough and $N_0$ large enough, this gives the bound
$$ \left|u\left(\frac{\delta}{N_j}, 0\right)\right| \gtrsim N_{j+1}^{\frac{3}{2}}.$$
Sending $j \to \infty$, we conclude that $u$ cannot be smoothly extended to the spacetime origin $(0,0)$.

Since all of the $u_i$ are spherically symmetric, $u$ is also.  Indeed if $U: (0,\frac{\delta}{N_0}] \times [0,+\infty) \to \R^2$ is the function defined by
\begin{equation}\label{U-def}
 U(t, |x|^2) := u(t,x)
\end{equation}
then we have
$$ U(t,y) = \sum_{i=1}^\infty U_i(t,y)$$
where the functions $U_i: (0, \frac{\delta}{N_0}] \times [0,+\infty) \to \R^2$ are given by the formula
\begin{equation}\label{uit}
 U_i(t,y) := N_i^{\frac{3}{2}} V( N_i t, N_i^2 y ) \eta\left( \frac{t N_{i-1}}{\delta} \right).
\end{equation}
Note from the support of $V, \eta$ that $U_i$ is supported in the region
\begin{equation}\label{u-supp}
\left\{ (t,y): t \lesssim \frac{\delta}{N_{i-1}}; y = (t + O(N_i^{-1}))^2 \right\}
\end{equation}

Our remaining task is to locate a smooth function $f\colon \R^m \to \R^m$ with all derivatives bounded, such that \eqref{boxu} holds on $(0,\frac{\delta}{N_0}] \times \R^{11}$.  Differentiating \eqref{utai} term-by-term (which is justified on any compact subset of $(0,\frac{\delta}{N_0}] \times \R^{11}$ as there are only finitely many non-zero terms), and noting that $u_1$ solves the free wave equation on $(0,\frac{\delta}{N_0}] \times \R^{11}$, we have
$$
\Box u(t,x) = \sum_{i=2}^\infty \Box u_i(t,x).$$
Since $v$ solves the wave equation, we see from the product rule that
$$ \Box u_i(t,x) := -2 N_i^{5/2} \frac{N_{i-1}}{\delta} (\partial_t v)( N_i t, N_i x ) \eta'\left( \frac{t N_{i-1}}{\delta} \right) 
- N_i^{3/2} \frac{N_{i-1}^2}{\delta^2} v(N_i t, N_i x) \eta''\left(\frac{t N_{i-1}}{\delta} \right).$$
Setting $F: (0,1] \times [0,+\infty) \to \R^2$ to be the function
\begin{equation}\label{F-def}
 F( t, |x|^2 ) := \Box u(t,x)
\end{equation}
we thus have
$$ F(t, y) = \sum_{i=2}^\infty F_i(t,y)$$
where
\begin{equation}\label{dam}
 F_i(t,y) := -2 N_i^{5/2} \frac{N_{i-1}}{\delta} (\partial_t V)( N_i t, N_i^2 y ) \eta'\left( \frac{t N_{i-1}}{\delta} \right)
- N_i^{3/2} \frac{N_{i-1}^2}{\delta^2} V(N_i t, N_i^2 y) \eta''\left(\frac{t N_{i-1}}{\delta} \right).
\end{equation}
In practice, the first term on the right-hand side of \eqref{dam} will dominate the second.

From \eqref{dam} and the support \eqref{region} of $v$, we see (for $N_0$ large enough) that $F_i(t,x)$ is only non-vanishing on the rectangle $R_i$ defined as
$$ R_i := \left\{ (t,y): \frac{\delta}{N_{i-1}} \leq t \leq 2 \frac{\delta}{N_{i-1}}; \quad \left(\frac{\delta}{2N_{i-1}}\right)^2 \leq y \leq \left(3 \frac{\delta}{N_{i-1}}\right)^2 \right\}.$$
Observe (if $\delta$ is small enough and $N_0$ large enough) that the $R_i$ are disjoint, thus $F = F_i$ on $R_i$, and $F$ vanishes outside of $\bigcup_i R_i$.  We have the following additional properties:

\begin{proposition}[Behaviour of $U,F$ on $R_i$]\label{urf}  Let $i \geq 2$.
\begin{itemize}
\item[(i)]  The map $U$ is a diffeomorphism from $R_i$ to $U(R_i)$.  In fact, for any constant $C>1$, if we set $R_{i,C}$ to be the rectangle
$$ R_{i,C} := \left\{ (t,y): C^{-1} \frac{\delta}{N_{i-1}} \leq t \leq C \frac{\delta}{N_{i-1}}; 0 \leq y \leq \left(C \frac{\delta}{N_{i-1}}\right)^2 \right\}$$
then (if $\delta$ is sufficiently small depending on $C$, and $N_0$ sufficiently large) $U$ is a diffeomorphism from $R_{i,C}$ to $U(R_{i,C})$.
\item[(ii)]  One has $U^{-1}(U(R_i)) = R_i$.  In other words, if $(t,y) \in (0,1] \times [0,+\infty)$ is such that $U(t,y) \in U(R_i)$, then $(t,y) \in R_i$.
\item[(iii)]  For $(t,y) \in R_i$, we have the derivative bounds
\begin{equation}\label{f-deriv}
 | (N_{i}^{-1} \partial_t)^j (N_i^{-1} N_{i-1}^{-1} \partial_y)^k F(t,y) | \lesssim_{\delta,j,k} 1,
\end{equation}
for all $j,k \geq 0$; if $(j,k) \neq 0$, we also have
\begin{equation}\label{u-deriv}
 | (N_{i}^{-1} \partial_t)^j (N_i^{-1} N_{i-1}^{-1} \partial_y)^k U(t,y) | \lesssim_{\delta,j,k} 1.
\end{equation}
We also have the non-degeneracy condition
\begin{equation}\label{u-deg}
 | N_{i}^{-1} \partial_t U(t,y) \wedge N_i^{-1} N_{i-1}^{-1} \partial_y U(t,y)| \gtrsim_\delta 1.
\end{equation}
where $(a,b) \wedge (c,d) := ad-bc$ is the wedge product on $\R^2$.
\end{itemize}
\end{proposition}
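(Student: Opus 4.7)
The central observation is that on $R_{i,C}$ the sum $U=\sum_j U_j$ is dominated by the single term $U_{i-1}$: the cutoff $\eta(tN_{i-2}/\delta)$ in \eqref{uit} equals $1$ on $R_{i,C}$ (since $tN_{i-2}/\delta\lesssim CN_{i-2}^{-3/2}$ is tiny for $N_0$ large), so the clean identity
$$U_{i-1}(t,y)=N_{i-1}^{3/2}V(N_{i-1}t,N_{i-1}^2 y)$$
holds on $R_{i,C}$, with the rescaling $(t,y)\mapsto(\tau,z):=(N_{i-1}t,N_{i-1}^2 y)$ sending $R_{i,C}$ onto the small box $[C^{-1}\delta,C\delta]\times[0,(C\delta)^2]$ near the origin of the $V$-domain. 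The plan is to transfer the properties of $V$ near the origin --- non-degenerate Jacobian (iv), quantitative injectivity (v), and dispersive decay (vi) of Proposition \ref{vorp} --- to $U$, treating the tail $G:=U-U_{i-1}=\sum_{j\neq i-1}U_j$ as a perturbation that is $o(1)$ (as $N_0\to\infty$) in the normalized derivative scale $N_i^{-1}\partial_t$, $(N_iN_{i-1})^{-1}\partial_y$.

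For part (iii), I would differentiate \eqref{uit} and \eqref{dam} termwise and invoke \eqref{vos}. Each normalized derivative applied to $V(N_j t,N_j^2 y)$ pulls out a factor $N_j/N_i$ or $N_j/N_{i-1}$ together with a derivative of $V$: for $j=i-1$ these factors are exactly $1$ and give an $O_\delta(1)$ contribution; for $j<i-1$ they are small (at most $N_{i-1}^{-3/2}$ in the worst case $j=i-2$); for $j\geq i$ the large-argument decay $(1+|N_j t|)^{-5}\lesssim (N_{i-1}/(\delta N_j))^5$ takes over. Summing yields \eqref{f-deriv} and \eqref{u-deriv}. For the non-degeneracy \eqref{u-deg}, a direct computation gives
$$N_i^{-1}\partial_t U_{i-1}\wedge (N_iN_{i-1})^{-1}\partial_y U_{i-1}=(\partial_\tau V\wedge\partial_z V)(\tau,z),$$
which by (iv) of Proposition \ref{vorp} and continuity is bounded below by a positive constant on $[\delta,2\delta]\times[(\delta/2)^2,(3\delta)^2]$ once $\delta$ is small; the $G$-contribution is negligible by the derivative bounds above.

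For part (i), (iv) of Proposition \ref{vorp} and the inverse function theorem yield a fixed neighborhood of $(0,0)$ on which $V$ is a diffeomorphism; for $\delta$ small enough, $[C^{-1}\delta,C\delta]\times[0,(C\delta)^2]$ lies in this neighborhood, so $U_{i-1}$ is a diffeomorphism from $R_{i,C}$ onto its image. Writing $U=U_{i-1}+G$ and invoking the derivative bounds above, $G$ is an $o(1)$-Lipschitz perturbation of $U_{i-1}$ in the normalized metric; a standard perturbation of the mean-value inequality then makes $U$ bi-Lipschitz on $R_{i,C}$, hence a diffeomorphism onto its image.

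The hard part is (ii): establishing the global identity $U^{-1}(U(R_i))=R_i$. The inclusion $\supseteq$ is trivial; for $\subseteq$, suppose $U(t,y)=U(t',y')$ with $(t',y')\in R_i$. If $(t,y)\in R_{i,C}$, part (i) immediately forces $(t,y)=(t',y')\in R_i$ (since $R_i\subset R_{i,C}$). The remaining case $(t,y)\notin R_{i,C}$ calls for a scale-separation argument, which I would drive using the second component and the fact (derived in the proof of Proposition \ref{vorp}) that $V_2$ attains its strict global maximum at the origin: then $U_2(R_i)$ lies in a narrow $\Theta(N_{i-1}^{3/2}\delta^2)$-wide slab just below the value $V_2(0,0)\sum_{j\leq i-1}N_j^{3/2}$, while for $(t,y)\in R_{j,C}$ with $j\neq i$ the analogous analysis places $U_2(t,y)$ near $V_2(0,0)\sum_{j'\leq j-1}N_{j'}^{3/2}$; the separation between these centers is $\gtrsim N_{\max(i,j)-1}^{3/2}V_2(0,0)$, which dwarfs the slab widths for $\delta$ small. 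For $(t,y)$ in the gaps between consecutive $R_{j,C}$'s or outside the support of $U$ altogether, the same $V_2$-bound together with the support description \eqref{u-supp} (strong Huygens), the dispersive decay \eqref{vos}, and property (iii) of Proposition \ref{vorp} (that $V$ avoids the lower-left quadrant of $\R^2$) rules out remaining coincidences. Parameters are chosen with $\delta$ small first and then $N_0$ large depending on $\delta$, so that all perturbative errors are dominated by the scale gaps.
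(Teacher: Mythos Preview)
Your treatment of (i) and (iii) is correct and essentially follows the paper: on $R_{i,C}$ the term $U_{i-1}$ dominates, with the $j<i-1$ terms small by the scale ratio and the $j\geq i$ terms small by the dispersive decay \eqref{vos}; the non-degeneracy then comes from Proposition~\ref{vorp}(iv) exactly as you say, and injectivity on $R_{i,C}$ follows by integrating these derivative estimates.

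Part (ii) is where your sketch has a real gap. The $V_2$-slab separation disposes of the case $(t,y)\in R_{j,C}$ with $j\neq i$, but it does not cover the gap regions between consecutive $R_{j,C}$'s, which are non-trivial since $C$ is fixed before $N_0$. Take for instance $(t,y)$ with $C\delta/N_i<t<C^{-1}\delta/N_{i-1}$ and $y^{1/2}=t+O(1/N_i)$, so that $U_i$ is the highest active term. Here $(N_{i-1}t,N_{i-1}^2y)$ is within $C^{-1}\delta$ of the origin, so $\sum_{j\leq i-1}U_{j,2}(t,y)$ lands in essentially the same slab as $U_2(R_i)$; a contradiction now requires $U_{i,2}(t,y)=N_i^{3/2}V_2(N_it,N_i^2y)$ to be bounded away from a specific tiny value, and there is no reason for that --- $V_2$ certainly takes arbitrarily small values on its support. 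One is forced to bring in $V_1$ and argue that \emph{both} components of $V(N_it,N_i^2y)$ would have to be small and negative, which (iii) forbids. But to see that both must be \emph{negative} one has to compare $U_{i-1}(t,y)$ with $U_{i-1}(t',y')$ quantitatively in each component, and this is precisely the content of property (v) of Proposition~\ref{vorp} (the bounds \eqref{vat}, \eqref{vat-2}), which your outline never invokes.

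The paper's argument for (ii) is organised around (v) rather than around a partition into boxes $R_{j,C}$. It lets $J$ be the largest index with $U_J(t',y')\neq 0$, and uses both halves of Proposition~\ref{vorp}(iii) to show that at least one component of $\sum_{j\leq J}U_j(t',y')$ is $\gtrsim N_{J-1}^{3/2}$; comparison with $|U(R_i)|\sim N_{i-1}^{3/2}$ forces $J\leq i$. When $J=i$, property (iii) combined with \eqref{vat}--\eqref{vat-2} forces $N_{i-1}t'\gtrsim\delta$, after which dispersive decay makes $U_i(t',y')$ negligible. A final application of \eqref{vat}--\eqref{vat-2} then converts $V(N_{i-1}t',N_{i-1}^2y')=V(N_{i-1}t,N_{i-1}^2y)+O(\delta^3)$ into $t'\sim\delta/N_{i-1}$ and $y'=O((\delta/N_{i-1})^2)$, placing $(t',y')$ in $R_{i,C}$ for a suitable fixed $C$, where (i) finishes. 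The mechanism that pulls the unknown point back into $R_{i,C}$ is exactly property (v), and your sketch has no substitute for it.
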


\begin{proof}  We begin with \eqref{u-deriv}.  Let $j,k \geq 0$, with $(j,k) \neq (0,0)$.  Using \eqref{nini}, we see that our task is to show that
\begin{equation}\label{dandy}
 |\partial_t^j \partial_y^k U| \lesssim_{\delta,j,k} N_{i-1}^{\frac{5}{2}j + \frac{7}{2}k}
\end{equation}
on $R_i$.  In fact we will show these bounds on the larger rectangle $R_{i,C}$ (assuming $\delta$ sufficiently small depending on $C$, and $N_0$ sufficiently large, and allowing implied constants to depend on $C$).  From the support \eqref{u-supp}, the fields $U_{i'}$ vanish on $R_{i,C}$ for $i'>i$, so we have
$$
|\partial_t^j \partial_y^k U|  \leq |\partial_t^j \partial_y^k U_i| + \sum_{i'=1}^{i-1} |\partial_t^j \partial_y^k U_{i'}|.$$
Applying the product rule and chain rule to \eqref{uit}, we have
$$ |\partial_t^j \partial_y^k U_i| \lesssim_{j,k,\delta} \sum_{j'=0}^j N_i^{\frac{3}{2} + j' + 2k} N_{i-1}^{j-j'} (\partial_t^{j'} \partial_y^k V)(N_i t, N_i^2 y) \eta^{(j-j')}\left( \frac{t N_{i-1}}{\delta} \right)$$
Applying \eqref{vos} and then \eqref{nini}, we conclude that
\begin{align*}
 |\partial_t^j \partial_y^k U_i| &\lesssim_{j,k,\delta} \sum_{j'=0}^j N_i^{\frac{3}{2} + j' + 2k} N_{i-1}^{j-j'} (N_i/N_{i-1})^{-\frac{d-1}{2}-k} \\
& \lesssim_{j,k,\delta} N_i^{\frac{3}{2} + j + 2k} (N_i/N_{i-1})^{-5-k} \\
&= N_{i-1}^{\frac{5}{2}j + \frac{7}{2}k - \frac{15}{4}}.
\end{align*}
Thus this contribution to \eqref{dandy} is acceptable (with some room to spare).  Thus it remains to show that
$$
 \sum_{i'=1}^{i-1} |\partial_t^j \partial_y^k U_{i'}| \lesssim_{\delta,j,k} N_{i-1}^{\frac{5}{2}j + \frac{7}{2}k} $$
on $R_{i,C}$.  For $1 \leq i' \leq i-1$, the $\eta$ factor in \eqref{uit} for $U_{i'}$ is locally equal to $1$, thus
$$ \partial_t^j \partial_y^k U_{i'}(t,y) =  N_{i'}^{\frac{3}{2}+j+2k} (\partial_t^j \partial_y^k) V( N_{i'} t, N_{i'}^2 y )$$
and thus by \eqref{vos} and \eqref{nini} we have
\begin{align*}
 \sum_{i'=1}^{i-1} |\partial_t^j \partial_y^k U_{i'}| &\lesssim_{\delta,j,k} \sum_{i'=1}^{i-1} N_{i'}^{\frac{3}{2}+j+2k} \\
&\lesssim_{\delta,j,k} N_{i-1}^{\frac{3}{2}+j+2k} \\
&\lesssim_{\delta,j,k} N_{i-1}^{\frac{5}{2}j + \frac{7}{2}k}
\end{align*}
as required, where we have used $(j,k) \neq (0,0)$ in the last line.  This establishes \eqref{u-deriv}.

Applying the above analysis to $(j,k) = (1,0), (0,1)$, we see that the $U_{i-1}$ term dominates, with
$$ \partial_t U(t,y) = N_{i-1}^{\frac{5}{2}} (\partial_t V)( N_{i-1} t, N_{i-1}^2 y ) + O_\delta( N_{i-1}^{\frac{5}{2} - \frac{15}{4} }) + O_\delta( N_{i-2}^{\frac{5}{2}} ) $$
and similarly
$$ \partial_y U(t,y) = N_{i-1}^{\frac{7}{2}} (\partial_y V)( N_{i-1} t, N_{i-1}^2 y ) + O_\delta( N_{i-1}^{\frac{7}{2} - \frac{15}{4} } ) + O_\delta( N_{i-2}^{\frac{7}{2}} ) $$
on $R_{i,C}$ (with the $N_{i-2}$ term deleted if $i=1$).  Since $(N_{i-1}t, N_{i-1}^2 y)$ is within $O(\delta)$ of the origin, we conclude (for $N_0$ large enough) that
\begin{equation}\label{uta}
 \partial_t U(t,y) = N_{i-1}^{\frac{5}{2}} ( (\partial_t V)(0,0) + O(\delta) )
\end{equation}
and
\begin{equation}\label{utb}
 \partial_y U(t,y) = N_{i-1}^{\frac{7}{2}} ( (\partial_y V)(0,0) + O(\delta) )
\end{equation}
on $R_{i,C}$.  In particular we have
$$ N_{i}^{-1} \partial_t U(t,y) \wedge N_i^{-1} N_{i-1}^{-1} \partial_y U(t,y) = (\partial_t V \wedge \partial_y V)(0,0 ) + O(\delta).$$
From Proposition \ref{vorp}(iv), the expression $(\partial_t V \wedge \partial_y V)(0,0 )$ is non-zero, and \eqref{u-deg} follows if $\delta$ is small enough.  

Also, if we integrate \eqref{uta}, \eqref{utb} we see that
\begin{align*}
 U(t,y) - U(t',y') &= N_{i-1}^{\frac{3}{2}} (  (\partial_t V)(0,0) N_{i-1} (t-t') + (\partial_y V)(0,0) N_{i-1}^2 (y-y') \\
&\quad + O( \delta (|N_{i-1} (t-t')| + |N_{i-1}^2 (y-y')| ) ) )
\end{align*}
for $(t,y), (t',y') \in R_{i,C}$.  From Proposition \ref{vorp}(v) we have
$$ |(\partial_t V)(0,0) N_{i-1} (t-t') + (\partial_y V)(0,0) N_{i-1}^2 (y-y')| \gtrsim |N_{i-1} (t-t')| + |N_{i-1}^2 (y-y')|$$
and so we conclude that $U(t,y) \neq U(t',y')$ if $(t,y) \neq (t',y')$, thus $U$ is injective on $R_{i,C}$.  Combining this with \eqref{u-deg} we obtain the conclusion (i).

Next, we establish \eqref{f-deriv}.  On $R_i$, $F$ is equal to $F_i$, and so it suffices to show that
\begin{equation}\label{dandy-2}
 |\partial_t^j \partial_y^k F_i| \lesssim_{\delta,j,k} N_{i-1}^{\frac{5}{2} j + \frac{7}{2}k}
\end{equation}
Applying \eqref{dam} and the product rule, using the fact that all derivatives of $\eta$ are bounded, followed by \eqref{vos} and \eqref{nini}, we have
\begin{align*}
 |\partial_t^j \partial_y^k F_i| &\lesssim_{\delta,j,k} \sum_{j'=0}^j N_i^{\frac{5}{2}} N_{i-1} N_i^{j'+2k} N_{i-1}^{j-j'} |(\partial_t^{j'+1} \partial_y^k V)(N_i t, N_i^2 y)| \\
&\quad + N_i^{\frac{3}{2}} N_{i-1}^2  N_i^{j'+2k} N_{i-1}^{j-j'} |(\partial_t^{j'} \partial_y^k V)(N_i t, N_i^2 y) | \\
&\lesssim_{\delta,j,k} \sum_{j'=0}^j N_i^{\frac{5}{2}} N_{i-1} N_i^{j'+2k} N_{i-1}^{j-j'} (N_i/N_{i-1})^{-5-k} \\
&\quad + N_i^{\frac{3}{2}} N_{i-1}^2  N_i^{j'+2k} N_{i-1}^{j-j'} (N_i/N_{i-1})^{-5-k} \\
&\lesssim_{\delta,j,k} N_i^{\frac{5}{2}} N_{i-1} N_i^{j+2k} (N_i/N_{i-1})^{-5-k}  + N_i^{\frac{3}{2}} N_{i-1}^2  N_i^{j+2k} (N_i/N_{i-1})^{-5-k} \\
&\lesssim_{\delta,j,k} N_i^{\frac{5}{2}} N_{i-1} N_i^{j+2k} (N_i/N_{i-1})^{-5-k} \\
&\lesssim_{\delta,j,k} N_{i-1}^{\frac{5}{2} j + \frac{7}{2} j - \frac{1}{4}}.
\end{align*}
Crucially, the term $\frac{1}{4}$ is non-negative (this is the $d=11$ case of \eqref{d10}), and the claim \eqref{f-deriv} follows.

Finally, we establish (ii), which is the most difficult claim to establish.  Suppose for contradiction that we had $(t,y) \in R_i$ and $(t',y') \not \in R_i$ such that $U(t,y) = U(t',y')$.  By the already established bounds on $U_{i'}(t,y)$, we see that
\begin{align}
 U(t,y) &= U_{i-1}(t,y) + O_{\delta}( N_{i-1}^{\frac{3}{2} - \frac{15}{4}} ) + O_\delta( N_{i-2}^{\frac{3}{2}} ) \\
&=U_{i-1}(t,y) + O(\delta^3 N_{i-1}^{\frac{3}{2}})\label{mats}
\end{align}
(say); by \eqref{uit} and the fact that $N_{i-1} t = O(\delta)$ and $N_{i-1}^2 y = O(\delta^2)$ is within $O(\delta)$ of the origin, we conclude that
$$
U(t,y) = N_{i-1}^{\frac{3}{2}} (V( N_{i-1} t, N_{i-1}^2 y ) + O(\delta^3)).
$$
Since $U(t',y') = U(t,y)$, we thus have
\begin{equation}\label{drib}
 U(t',y') = N_{i-1}^{\frac{3}{2}} (V(N_{i-1} t, N_{i-1}^2 y ) + O(\delta^3)) \sim N_{i-1}^{\frac{3}{2}}.
\end{equation}
Let $J$ be the largest natural number for which $U_J(t',y')$ is non-zero; this quantity is finite from \eqref{u-supp} since $t'>0$, and positive since $U(t',y')$ is non-zero.  Then from \eqref{u-supp} we have
\begin{equation}\label{sat}
t' \lesssim \frac{\delta}{N_{J-1}}
\end{equation}
and
\begin{equation}\label{say}
 y' \lesssim (t')^2 + \frac{1}{N_J^2}.
\end{equation}
In particular, $(N_j t', N_j^2 y') = O(\delta)$ and $\frac{t' N_{j-1}}{\delta} < 1$ for $j < J$, if $N_0$ is large enough; from this and \eqref{uit} we have
$$ U_j(t',y') = N_j^{\frac{3}{2}} (V(0,0) + O(\delta))$$
for $j < J$.  From the first half of Proposition \ref{vorp}(iii) we conclude (for $\delta$ small enough) that both components of $U_j(t',y')$ are positive and $\gtrsim N_j^{\frac{3}{2}}$ for all $j < J$.  On the other hand, from the second half of Proposition \ref{vorp}(iii) we know that at least one of the components of $U_J(t',y')$ is non-negative.  Summing, we conclude that at least one of the components of $\sum_{j=1}^J U_j(t',y')$ is positive and $\gtrsim N_{J-1}^{\frac{3}{2}}$.  Comparing this with \eqref{drib}, we see that $J \leq i$.  
In particular
$$ U(t',y') = \sum_{j=1}^i U_j(t',y').$$
On the other hand, from \eqref{uit}, \eqref{vos} one has
$$ U_j(t',y') \lesssim N_j^{\frac{3}{2}}$$
for $j < i-1$, and thus
\begin{align}
U(t',y') &= U_{i-1}(t',y') + U_i(t',y') + O( N_{i-2}^{\frac{3}{2}} )\nonumber \\
&= U_{i-1}(t',y') + U_i(t',y') + O( \delta^3 N_{i-1}^{\frac{3}{2}} ) \label{soso}
\end{align}
(say), if $N_0$ is large enough.

We in fact claim that
\begin{equation}\label{season}
U(t',y') = U_{i-1}(t',y') + O( \delta^3 N_{i-1}^{\frac{3}{2}} ).
\end{equation}
If $J<i$, then $U_i(t',y')=0$ and the claim \eqref{season} is immediate from \eqref{soso}.  Now suppose instead that $J=i$.
From Proposition \ref{vorp}(iii) we know that at least one of the components $U_{i,1}(t',y'), U_{i,2}(t',y')$ of $U_i(t',y')$ is non-negative, thus we have
$$ U_{,a}(t',y') \geq U_{i-1,a}(t',y') + O( \delta^3 N_{i-1}^{\frac{3}{2}} )$$
for some $a=1,2$, where $U_{,1}, U_{,2}$ are the components of $U$.
On the other hand, from \eqref{sat}, \eqref{say} we have $N_{i-1} t' \lesssim \delta$ and $N_{i-1}^2 y' \lesssim \delta^2$, so from \eqref{uit} we have
$$ U_{i-1}(t',y') = N_{i-1}^{\frac{3}{2}} ( V(N_{i-1} t', N_{i-1}^2 y') + O(\delta^3) ).$$
Comparing this with \eqref{drib}, we conclude that
$$ V_a( N_{i-1} t, N_{i-1}^2 y ) \geq V_a( N_{i-1} t', N_{i-1}^2 y' ) + O(\delta^3).$$
Using \eqref{vat}, \eqref{vat-2}, we conclude that either
$$ N_{i-1} t' + N_{i-1}^2 y'  \gtrsim N_{i-1} t +  N_{i-1}^2 y - O(\delta^3 )$$
or
$$ (N_{i-1} t')^2 + N_{i-1}^2 y'  \gtrsim (N_{i-1} t)^2 +  N_{i-1}^2 y - O(\delta^3 ).$$
In either case we have either $N_{i-1} t' \gtrsim \delta$ or $N_{i-1}^2 y' \gtrsim \delta^2$; actually, from \eqref{say} the latter estimate implies the former, thus $N_{i-1} t' \gtrsim \delta$.  We now see from \eqref{uit}, \eqref{vos} that
\begin{align*}
 |U_i(t',y')| &\lesssim N_i^{\frac{3}{2}} (\delta N_i / N_{i-1})^{-5} \\
&\lesssim \delta^3 N_{i-1}^{\frac{3}{2}},
\end{align*}
and \eqref{season} follows.

From \eqref{season}, \eqref{drib} we have
\begin{equation}\label{sss}
 U_{i-1}(t',y') = N_{i-1}^{\frac{3}{2}} (V(N_{i-1} t, N_{i-1}^2 y ) + O(\delta^3)) \sim N_{i-1}^{\frac{3}{2}}.
\end{equation}
From \eqref{uit}, \eqref{vos} this implies that
$$ t' = O( N_{i-1}^{-1} ) $$
and (from the support of $V$)
$$ y' = O( N_{i-1}^{-2} ).$$
In particular, from \eqref{uit} we have
$$ U_{i-1}(t',y') = N_{i-1}^{\frac{3}{2}} V( N_{i-1} t', N_{i-1}^2 y' ).$$
From \eqref{sss} we conclude that
$$ V( N_{i-1} t', N_{i-1}^2 y' ) = V( N_{i-1} t, N_{i-1}^2 y ) + O( \delta^3 ).$$
Applying \eqref{vat}, \eqref{vat-2} we conclude that
$$ N_{i-1} t' + N_{i-1}^2 y' \sim N_{i-1} t + N_{i-1}^2 y + O( \delta^3 ) \sim \delta$$
and
$$ (N_{i-1} t')^2 + N_{i-1}^2 y' \sim (N_{i-1} t)^2 + N_{i-1}^2 y + O( \delta^3 ) \sim \delta^2$$
and thus
$$ t' \sim \delta N_{i-1}^{-1} $$
and 
$$ y' = O( \delta^2 N_{i-1}^{-2} ).$$
Thus $(t,y), (t',y')$ both lie in $R_{i,C}$ for some fixed $C$.  But by part (i), this forces $(t,y) = (t',y')$, a contradiction.  This concludes the proof of (ii).
\end{proof}

We are now ready to conclude the proof of Theorem \ref{main}.  We define the function $f\colon \R^2 \to \R^2$ by setting
\begin{equation}\label{fui}
 f( U( t, y ) ) := F(t,y)
\end{equation}
whenever $(t,y) \in R_i$ for some $i$, with $f$ vanishing outside of $\bigcup_{i=1}^\infty U(R_i)$.  From parts (i) and (ii) of Proposition \ref{urf} we see that $f$ is well-defined and smooth, and from \eqref{U-def}, \eqref{F-def} we see that $u$ and $f$ obey the equation \eqref{boxu}.  The only remaining task is to show that $f$ and all of its derivatives are bounded.  By \eqref{fui} it suffices to show that the composition map $F \circ U^{-1} F: U(R_i) \to \R^2$ has all derivatives bounded uniformly in $i$, using Proposition \ref{urf}(i) to construct an inverse map $U^{-1} \colon U(R_i) \to R_i$. (Note that we can allow the bound to depend on $\delta$ and on the number of derivatives used.)

It is convenient to work on the renormalised rectangle
$$ \tilde R_i := \{ (N_i t, N_i N_{i-1} y): (t,y) \in R_i \}$$
with the renormalised functions $\tilde U, \tilde F \colon \tilde R_i \to \R^2$ given by
$$ \tilde U(\tilde t,\tilde y) := U\left( \frac{\tilde t}{N_i}, \frac{\tilde y}{N_i N_{i-1}} \right) $$
and
$$ \tilde F(\tilde t,\tilde y) \coloneqq F\left( \frac{\tilde t}{N_i}, \frac{\tilde y}{N_i N_{i-1}} \right) $$
for $(\tilde t, \tilde y) \in \tilde R_i$.  Clearly $U(R_i) = \tilde U( \tilde R_i)$ and $F \circ U^{-1}=  \tilde F \circ \tilde U^{-1}$, so it suffices to show the pointwise estimates
\begin{equation}\label{r2j}
 |\nabla_{\R^2}^j ( \tilde F \circ \tilde U^{-1} )| \lesssim_{\delta,j} 1 
\end{equation}
on $\tilde U(\tilde R_i)$ for all $j \geq 0$.  From \eqref{f-deriv} and the chain rule we have
\begin{equation}\label{fu}
 | \nabla_{\tilde t,\tilde y}^j \tilde F(\tilde t,\tilde y) | \lesssim_{\delta,j} 1,
\end{equation}
on $\tilde R_i$ for all $j \geq 0$; if $j \geq 1$, we also have from \eqref{u-deriv} that
\begin{equation}\label{tu}
 | \nabla_{\tilde t,\tilde y}^j \tilde U(\tilde t,\tilde y) | \lesssim_{\delta,j} 1
\end{equation}
on $\tilde R_i$.  Finally, from \eqref{u-deg} we have
\begin{equation}\label{tu-2}
 | \partial_{\tilde t} \tilde U(\tilde t,\tilde y) \wedge \partial_{\tilde y} \tilde U(\tilde t,\tilde y)| \gtrsim_\delta 1.
\end{equation}
From the inverse function theorem and many applications of the chain and product rules, we see from \eqref{tu}, \eqref{tu-2} that
$$ |\nabla_{\R^2}^j \tilde U^{-1} | \lesssim_{\delta,j} 1$$
on $\tilde U(\tilde R_i)$ for all $j \geq 1$; combining this with \eqref{fu} and many more applications of the chain and product rules, we obtain \eqref{r2j} as desired.  This completes the proof of Theorem \ref{main}.

\appendix

\section{Global regularity in low dimension}\label{reg}

In this appendix we show global regularity for the equation \eqref{boxu} in dimensions $d \leq 9$.  Actually, to simplify the exposition we shall only handle the most difficult case $d=9$; dimensions lower than $9$ can be handled by a suitable modification of the numerology below, or else by adding some dummy spatial variables to extend the solution $u$ (and the initial data $u_0, u_1$), and then using finite speed of propagation to smoothly truncate the solution to again be compactly supported in space; we leave the details to the interested reader.

Fix $m, f$.  Assume for sake of contradiction that global regularity failed, then there exists $0 < T_* < \infty$ and a smooth solution $u \colon [0,T_*) \times \R^9 \to \R^m$ to \eqref{boxu} which is compactly supported in space, but which does not lie in $L^\infty_t L^\infty_x( [0,T_*) \times \R^9)$.

For each $s \geq 0$, let $P(s)$ denote the claim that the nonlinearity $f(u)$ lies in $L^1_t H^s_x( [0,T_*) \times \R^9)$.  From the discussion in the introduction, we know that $P(1)$ holds.  We will show the following recursion:

\begin{theorem}  For any $1 \leq s \leq \frac{7}{2}$, the claim $P(s)$ implies $P(s')$ for some $s' = s'(s) > s$ that depends continuously on $s$.
\end{theorem}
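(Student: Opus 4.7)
The plan is to bootstrap $P(s)$ to $P(s')$ in three stages: (i) convert the $L^1_t H^s_x$ bound on the forcing $\Box u = f(u)$ into Sobolev control of $u$ itself via the wave-equation energy inequality; (ii) promote this to mixed-norm Strichartz estimates on various fractional derivatives of $u$ in dimension $d=9$; and (iii) expand $f(u)$ through a Faà di Bruno / Moser-type chain rule, and estimate each resulting product via Hölder's inequality, Sobolev embedding, and the Strichartz bounds from step (ii).

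For (i), the standard inhomogeneous energy estimate applied to $\Box u = f(u)$ gives
$$ \|u\|_{L^\infty_t H^{s+1}_x([0,T_*) \times \R^9)} + \|\nabla_{t,x} u\|_{L^\infty_t H^s_x} \lesssim_s 1, $$
absorbing contributions from the (fixed, smooth) initial data. For (ii), applying $|\nabla|^\sigma$ to both sides of the equation and using the sharp wave Strichartz estimates in dimension $d=9$ (wave-admissible $(q,r)$ satisfying $\tfrac{1}{q} + \tfrac{4}{r} = 2$, $q \geq 2$), one obtains a family of mixed-norm bounds
$$ \| |\nabla|^\sigma u\|_{L^q_t L^r_x} \lesssim_s 1, \qquad \sigma = \tfrac{9}{2} - \tfrac{1}{q} - \tfrac{9}{r}, $$
for $\sigma$ in a range determined by $s$ (Strichartz forcing is fed back into itself using $P(s)$).

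For (iii), write $s' = k + \theta$ with $k \in \N$ and $\theta \in [0,1)$. The Faà di Bruno formula expands $\nabla^k f(u)$ into a sum of terms of the form
$$ (\nabla^j f)(u) \, \nabla^{k_1} u \otimes \cdots \otimes \nabla^{k_j} u, \qquad k_1 + \cdots + k_j = k, \quad k_i \geq 1, $$
and the hypothesis that all derivatives of $f$ are bounded reduces the task to estimating each such product in $L^1_t L^2_x$. Applying Hölder's inequality with $\sum 1/q_i = 1$ and $\sum 1/r_i = 1/2$ and feeding the individual factors through Sobolev embedding and the Strichartz bounds from (ii) gives the required bound. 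The fractional surplus $\theta$ is handled either via a Kato--Ponce-type fractional chain rule (exploiting $f' \in L^\infty$) or by real interpolation between consecutive integer values of $s'$.

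The main obstacle is the exponent balance: for every partition $(k_1, \ldots, k_j)$ arising in the Faà di Bruno expansion, one must exhibit admissible Hölder exponents $(q_i, r_i)$ that not only close the estimate but actually produce a strict gain $s' - s > 0$, rather than merely reproducing $P(s)$. The hypothesis $s \leq 7/2$ identifies the worst such partition in dimension $d = 9$; above this threshold one is close enough to $L^\infty_x$ control of $u$ (via $H^\sigma \hookrightarrow L^\infty$ for $\sigma > 9/2$) that a cruder Moser-type estimate suffices, and global regularity follows by iterating a finite number of steps. The continuous dependence $s' = s'(s)$ is automatic, since all the exponent constraints involved are affine in $s$.
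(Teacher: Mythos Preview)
Your outline has a genuine gap at step (iii): the Hölder/Strichartz balance you describe does \emph{not} close for the Faà di Bruno terms with many low-order factors, and this is not a matter of bookkeeping but the actual heart of the problem. Take the simplest nontrivial case $s=1$ and the term $f''(u)\,\nabla u\otimes\nabla u$ arising from $\nabla^2 f(u)$. To place $(\nabla u)^2$ in $L^1_t L^2_x(\R^9)$ you need $\nabla u \in L^2_t L^4_x$. But energy plus $P(1)$ only gives $\nabla u \in L^\infty_t H^1_x \hookrightarrow L^\infty_t L^{18/7}_x$, and the wave Strichartz estimates at this regularity (admissible $(q,r)$ with $q\ge 2$, $\tfrac{2}{q}+\tfrac{8}{r}\le 4$, and $\tfrac{1}{q}+\tfrac{9}{r}\ge \tfrac{9}{2}-s=\tfrac{7}{2}$) confine $\nabla u$ to $L^2_t L^r_x$ only for $\tfrac{8}{3}\le r\le 3$, which is strictly short of $r=4$. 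No combination of these two pieces via Hölder reaches $L^2_t L^4_x$. The same obstruction persists, and worsens, for the top-order terms $(\nabla u)^k$ with larger $k$; asserting that ``one must exhibit admissible Hölder exponents'' is exactly the step that fails.

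The paper's proof supplies the missing idea, and it is not a refinement of your scheme but a different mechanism. One works frequency by frequency: to bound $\|P_N f(u)\|_{L^1_t L^2_x}$, first peel off $u_{\ge N^{1-\eps}}$ via the Lipschitz bound on $f$, then write $P_N f(u_{<N^{1-\eps}}) = N^{-J} P_N \nabla^J f(u_{<N^{1-\eps}})$ and expand by Faà di Bruno. The crucial point is that because $|f|\lesssim 1$, one may estimate not $N^{-J}|P_N Q|$ itself but $\min(1, N^{-J}|P_N Q|)$. For terms with many factors ($k$ large) one interpolates via $\min(1,X)\le X^{p/2k}$, which effectively relaxes the target space from $L^1_t L^2_x$ to $L^{p/2k}_t L^{p/k}_x$ and demands only $\nabla u\in L^{p/2}_t L^p_x$ for a single $p$ with $c(p)>1-s$; this is exactly what Strichartz provides. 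For terms with few factors one instead spends the $N^{-\eps J}$ gain coming from the frequency restriction to $<N^{1-\eps}$. Your proposal has no analogue of the $\min(1,\cdot)$ device, and without it the many-factor terms cannot be controlled in nine dimensions.
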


Iterating this claim (noting that $s'(s)-s$ will be bounded from below on $[1,\frac{7}{2}]$), we conclude that $P(s)$ holds for some $s > \frac{7}{2}$, which by \eqref{boxu} and energy estimates implies that $u$ lies in $L^\infty_t H^s_x([0,T_*) \times \R^9)$ for some $s > \frac{9}{2}$, and the desired contradiction then follows from Sobolev embedding.

It remains to prove the theorem. To simplify the notation we omit the domains $\R^9$ and $[0,T_*) \times \R^9$ from the Lebesgue norms in the estimates below.
 Fix $1 \leq s \leq \frac{7}{2}$.  Let $2 \leq p < \infty$ be an exponent depending continuously on $s$ to be chosen later.  Let $\eps>0$ be a small quantity depending continuously on $s,p$ to be chosen later, and then $J$ be a large integer depending on $s,p,\eps$ to be chosen later.  For any $N \geq 1$, we let $P_N$ be a smooth Fourier projection of Littlewood-Paley\footnote{See e.g. \cite[Appendix A]{tao-book} for the basic theory of Littlewood-Paley projections that are needed here.} type to $\{ \xi: N \leq 1 + |\xi| \leq 4N \}$, in such a fashion that $\sum_N P_N$ is the identity when $N$ ranges over powers of two. We now use $X \lesssim Y$, $Y \gtrsim X$, or $X = O(Y)$ to denote a bound of the form $|X| \leq CY$ where $C$ can depend on $s, p, \eps, J, m, f, T_*, u_0, u_1, u$ but does not depend on additional variables such as $N$.  Applying $P_N$ to the hypothesis $P(s)$, we conclude in particular that
$$ \| P_N( f(u) ) \|_{L^1_t L^2_x} \lesssim N^{-s} $$
for all $N$.  Also, since the initial data $u_0, u_1$ is smooth, one has
$$ N \| P_N u_0 \|_{L^2_x} + \| P_N u_1 \|_{L^2_x} \lesssim N^{-s}.$$
On the other hand, applying $P_N$ to \eqref{boxu} we see that
$$ \Box (P_N u) = P_N( f(u) ) $$
and that $P_N u$ has initial position $P_N u_0$ and initial velocity $P_N u_1$.  Energy estimates then give
\begin{equation}\label{energy} 
\| P_N u \|_{L^\infty_t L^2_x} \lesssim N^{-s - 1}
\end{equation}
for any $N \geq 1$.
Meanwhile, the endpoint\footnote{One could also use the simpler non-endpoint Strichartz estimates (see e.g. \cite{sogge-wave}) here if desired, after adjusting the exponents by an epsilon, to obtain the same conclusions of global regularity for \eqref{boxu}; we leave the details to the interested reader.} Strichartz estimate \cite[Corollary 1.3]{tao-keel} (with $q=2, r = 8/3, \gamma = 5/8$) asserts that
$$ \| u \|_{L^2_t L^{8/3}_x} \lesssim \| f \|_{\dot H^{5/8}_x} + \|g\|_{\dot H^{-3/8}_x}$$
whenever $\Box u = 0$ with $u(0)=f$ and $u_t(0)=g$, which by Duhamel's formula and Minkowski's inequality gives
$$ \| u \|_{L^2_t L^{8/3}_x} \lesssim \| f \|_{\dot H^{5/8}_x} + \|g\|_{\dot H^{-3/8}_x} + \| F \|_{L^1_t \dot H^{-3/8}_x}$$
whenever $\Box u = F$ with $u(0)=f$ and $u_t(0)=g$; applying this with $u,f,g,F$ replaced by $P_N u, P_N u_0, P_N u_1, P_N(f(u))$ gives the estimate
$$ \| P_Nu \|_{L^2_t L^{8/3}_x} \lesssim N^{-s-3/8},$$
for any $N \geq 1$, which from Bernstein's inequality yields
\begin{equation}\label{strick}
 \|P_N u \|_{L^2_t L^4_x} \lesssim N^{-s+3/4}.
\end{equation}
Meanwhile, from \eqref{energy} and the Bernstein and Holder inequalities we have
\begin{equation}\label{strick-inf} 
\| P_N u \|_{L^\infty_t L^{\infty}_x} \lesssim N^{-s + \frac{7}{2}}
\end{equation}
and
\begin{equation}\label{strick-en} 
\| P_N u \|_{L^1_t L^2_x} \lesssim N^{-s - 1}.  
\end{equation}
By interpolation between \eqref{strick}, \eqref{strick-inf}, and \eqref{strick-en}, we have
\begin{equation}\label{cap}
\| P_N u \|_{L^{p/2}_t L^p_x} \lesssim N^{-s - c(p)}
\end{equation}
for all $2 \leq p \leq \infty$, where
where
\begin{equation}\label{cp-1}
 c(p)\coloneqq \min\left( \frac{7}{p} - \frac{5}{2}, \frac{11}{p}-\frac{7}{2} \right)
\end{equation}
In particular, upon dyadic summation we see that
\begin{equation}\label{cap-2}
\| \nabla u \|_{L^{p/2}_t L^p_x} \lesssim 1
\end{equation}
whenever $2 \leq p \leq \infty$ is such that $c(p) > 1-s$.  The utility of this estimate will become clearer shortly.

To prove $P(s')$ for some $s'>s$ depending continuously on $s$, it suffices to establish the bound
\begin{equation}\label{pin}
\| P_N(f(u)) \|_{L^1_t L^2_x} \lesssim N^{-s-\eps}
\end{equation}
for all $N \geq 2$ (since the $N=1$ contribution can be handled by the existing hypothesis $P(s)$).  Accordingly, let us fix $N \geq 2$.
We split
$$ u = u_{<N^{1-\eps}} + u_{\geq N^{1-\eps}}$$
where 
$$ u_{<N^{1-\eps}} = \sum_{M < N^{1-\eps}} P_M u$$
and
$$ u_{\geq N^{1-\eps}} = \sum_{M \ge N^{1-\eps}} P_M u$$
where $M$ ranges over powers of two.  By the triangle inequality we have
$$
\| P_N(f(u)) \|_{L^1_t L^2_x} \leq
\| P_N(f(u_{<N^{1-\eps}})) \|_{L^1_t L^2_x} + \| P_N(f(u) - f(u_{<N^{1-\eps}}) \|_{L^1_t L^2_x}.$$

From Plancherel's theorem and the Lipschitz bound $|f(u) - f(u_{<N^{1-\eps}})| \lesssim u_{\geq N^{1-\eps}}$, followed by \eqref{strick-inf}, one has
\begin{align*}
\| P_N(f(u) - f(u_{<N^{1-\eps}}) \|_{L^1_t L^2_x} \\
&\lesssim \| f(u) - f(u_{<N^{1-\eps}}) \|_{L^1_t L^2_x} \\
&\lesssim \| u_{\geq N^{1-\eps}} \|_{L^1_t L^2_x} \\
&\lesssim N^{-(s+1)(1-\eps)}\\
&\lesssim N^{-s-\eps}
\end{align*}
if $\eps$ is small enough.
Thus it will now suffice to show
\begin{equation}\label{pin-2}
\| P_N(f(u_{<N^{1-\eps}})) \|_{L^1_t L^2_x} \lesssim N^{-s-\eps}.
\end{equation}
By Plancherel's theorem we have
$$ 
\| P_N(f(u_{<N^{1-\eps}})) \|_{L^1_t L^2_x} \lesssim N^{-J} \| P_N(\nabla^J f(u_{<N^{1-\eps}})) \|_{L^1_t L^2_x}.$$
Since $f(u_{<N^{1-\eps}}) = O(1)$, we also have
$$ N^{-J} |P_N(\nabla^J f(u_{<N^{1-\eps}})) \lesssim 1 $$
and thus
$$ 
\| P_N(f(u_{<N^{1-\eps}})) \|_{L^1_t L^2_x} \lesssim \| \min( 1, N^{-J} |P_N(\nabla^J f(u_{<N^{1-\eps}}))| ) \|_{L^1_t L^2_x}.$$
By repeated application of the chain rule, and the hypothesis that all derivatives of $f$ are bounded, we can expand $\nabla^J f(u_{<N^{1-\eps}})$ as a sum of $O(1)$ terms $Q$, each of which obeys a bound of the form
\begin{equation}\label{so}
 Q = O( |\nabla^{j_1} u_{<N^{1-\eps}}| \dots |\nabla^{j_k} u_{N^{1-\eps}}|
\end{equation}
for some $k \geq 1$ and $j_1,\dots,j_k \geq 1$ with 
\begin{equation}\label{stab}
j_1+\dots+j_k = J.
\end{equation}
It thus suffices to show that
\begin{equation}\label{pin-3}
\| \min(1, N^{-J} |P_N(Q)|) \|_{L^1_t L^2_x} \lesssim N^{-s-\eps}.
\end{equation}
for each one of these terms $Q$.

There are two cases, depending on whether $k$ is large or small.  First suppose that $k$ is small in the sense that $k \leq p/2$.  In this case, we use \eqref{strick-inf}, \eqref{strick-en} to very crudely bound
$$ \| \nabla^{j_i} u_{\leq N^{1-\eps}} \|_{L^\infty_t L^{\infty}_x},
\| \nabla^{j_i} u_{\leq N^{1-\eps}} \|_{L^1_t L^{2}_x} \lesssim N^{(1-\eps) (j_i + \frac{5}{2})} $$
for all $i=1,\dots,k$, and hence by \eqref{so} and H\"older's inequality
$$ \| Q \|_{L^1_t L^2_x} \lesssim N^{(1-\eps) (J + \frac{5}{2} k)};$$
applying $P_N$ and then $\min(1,\cdot)$ we conclude that
$$ \| \min( 1, N^{-J} |P_N(Q)| ) \|_{L^1_t L^2_x} \lesssim N^{\frac{5}{2} k -\eps J},$$
which gives \eqref{pin-3} for $J$ large enough depending on $\eps,p$, since $k$ is bounded by $p/2$.

Now suppose that $k > p/2$.  Then we may bound
$$ \min(1, N^{-J} |P_N(Q)|) \leq (N^{-J} |P_N(Q)|)^{\frac{p}{2k}}$$
and so to prove \eqref{pin-3} it will suffice to show that
$$
\| (N^{-J} |P_N(Q)|)^{\frac{p}{2k}} \|_{L^1_t L^2_x} \lesssim N^{-s-\eps}$$
which we may rearrange as
\begin{equation}\label{stamina}
\| N^{-J} |P_N(Q)| \|_{L^{p/2k}_t L^{p/k}_x} \lesssim N^{-\frac{2k}{p} (s+\eps)}.
\end{equation}
Using the convolution kernel of $P_N$, we have a pointwise bound
$$ |P_N(Q)| \lesssim |Q| * K_N$$
where $K_N$ is the kernel
$$ K_N(x) := N^9 (1 + N|x|)^{-100k}.$$
By \eqref{so} and H\"older's inequality we have
$$ |Q| * K_N \lesssim \prod_{i=1}^k (|\nabla^{j_i} u_{\leq N^{1-\eps}}|^k * K_N)^{1/k}$$
and by further application of H\"older's inequality, we thus have
$$
\| N^{-J} |P_N(Q)| \|_{L^{p/2k}_t L^{p/k}_x} 
\lesssim N^{-J} \prod_{i=1}^k \| (|\nabla^{j_i} u_{\leq N^{1-\eps}}|^k * K_N)^{1/k} \|_{L^{p/2}_t L^{p}_x}.$$
Using the frequency support of $\nabla^{j_i} u_{\leq N^{1-\eps}}$, we have a pointwise bound
$$ |\nabla^{j_i} u_{\leq N^{1-\eps}}| \lesssim |\nabla^{j_i} u_{\leq N^{1-\eps}}| * K_N.$$
For any function $f\colon \R^9 \to \R$, we have the pointwise bound
$$ |f|*K_N(y) \lesssim (1+N|x-y|)^{10} (|f|*\tilde K_N)(x)$$
for any $x,y$, where
$$ \tilde K_N(x) := N^9 (1+N|x|)^{-10},$$
and hence
$$ ((|f|*K_N)^k * K_N)^{1/k} \lesssim |f|*\tilde K_N.$$
Applying this with $f := |\nabla^{j_i} u_{\leq N^{1-\eps}}|$, we conclude that
$$
\| N^{-J} |P_N(Q)| \|_{L^{p/2k}_t L^{p/k}_x} 
\lesssim N^{-J} \prod_{i=1}^k \| |\nabla^{j_i} u_{\leq N^{1-\eps}}| * \tilde K_N \|_{L^{p/2}_t L^{p}_x}.$$
By Young's inequality we may remove the convolution with $\tilde K_N$.  On frequencies less than $N^{1-\eps}$, the gradient operator $\nabla$ has an $L^p_x$ operator norm of $O(N^{1-\eps})$, so we conclude that
$$
\| N^{-J} |P_N(Q)| \|_{L^{p/2k}_t L^{p/k}_x} 
\lesssim N^{-J} (\prod_{i=1}^k N^{(1-\eps)(j_i-1)}) \| \nabla u_{\leq N^{1-\eps}} \|_{L^{p/2}_t L^{p}_x}^k.$$
If $c(p) > 1-s$, we conclude from \eqref{cap}, \eqref{stab} that
$$
\| N^{-J} |P_N(Q)| \|_{L^{p/2k}_t L^{p/k}_x} 
\lesssim N^{-J} N^{(1-\eps)(J-k)} \lesssim N^{-k}.$$
We thus obtain \eqref{stamina} as long as $p > s/2$, and $\eps$ is small enough.

In summary, we have concluded the desired bound $P(s')$ as long as we can find $2 \leq p < \infty$ such that $c(p) > 1-s$ and $p > 2s$.  By the continuity of $c$, this condition is equivalent to the requirement that
$$ c(2s) > 1-s.$$
But this can be verified from \eqref{cp-1} for all $1 \leq s \leq 7/2$ by a routine calculation (see also Figure \ref{9d}).

\begin{figure} [t]
\centering
\includegraphics{./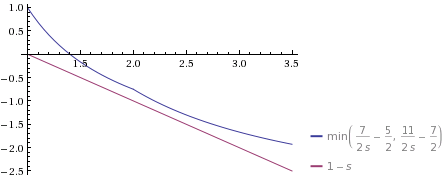}
\caption{A plot of $c(2s)$ and $1-s$ in the range $1 \leq s \leq 7/2$ in the nine-dimensional case.}
\label{9d}
\end{figure}

\begin{remark}  If one were working in $10$ dimensions instead of $9$, then the relevant range of $s$ is now $1 \leq s \leq 4$, and the analogue of the function $c$ is now given by
$$ c(p) := \min( \frac{8}{2p} - 3, \frac{12}{p}-4).$$
The requirement $c(2s) > 1-s$ then fails in the range $2 \leq s \leq 3$, creating a gap in the iterative argument; see Figure \ref{10d}.   To close this gap in ten dimensions, it appears that one needs to go beyond the classical Strichartz estimates; for instance, the Strichartz estimates in amalgam spaces \cite{tao-lowreg} may be of use, although it is not clear to the author if they are able to bridge the gap completely.
\end{remark}

\begin{figure} [t]
\centering
\includegraphics{./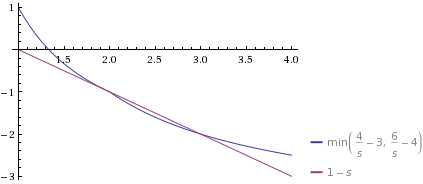}
\caption{A plot of $c(2s)$ and $1-s$ in the range $1 \leq s \leq 4$ in the ten-dimensional case.}
\label{10d}
\end{figure}

\end{document}